\newcommand{\blue}[1]{{\color{blue}#1}}
\newcommand\abs[1]{|#1|}
\newcommand{\R}{\mathbb{R}}
\newcommand{\inv}{^{-1}}
\newcommand{\mbz}{\mathbb{Z}}
\newcommand{\mbn}{\mathbb{N}}
\newcommand{\mcc}{\mathcal{C}}
\newcommand{\mcg}{\mathcal{G}}
\newcommand{\mcs}{\mathcal{S}}
\newcommand{\mme}{\mathrm{e}}
\newcommand{\mmi}{\mathrm{i}}
\newcommand{\fot}{\frac{1}{2}}
\DeclareMathOperator{\RE}{Re}
\newcommand*\pFq[6][8]{%
	
}
\theoremstyle{plain}
\newtheorem{thm}{Theorem}[section]
\newtheorem*{thm*}{Theorem A}
\newtheorem*{thm**}{Theorem B}
\newtheorem{lem}[thm]{Lemma}
\theoremstyle{definition}
\theoremstyle{remark}
\title{Structure of large quadratic character sums}
\author{Zikang Dong}
\author{Weijia Wang}
\author{Hao Zhang}
\address[Zikang Dong]{School of Mathematical Sciences, Tongji University, Shanghai 200092, P. R. China}
\address[Weijia Wang]{Yanqi Lake Beijing Institute of Mathematical Sciences and Applications $\&$ Yau Mathematical Sciences Center, Tsinghua University, Beijing 101408, P. R. China}
\address[Hao Zhang]{School of Mathematics, Hunan University, Changsha 410082, P. R. China}
\email{zikangdong@gmail.com}
\email{weijiawang@tsinghua.edu.cn}
\email{zhanghaomath@hnu.edu.cn}
\begin{document}
	
	\maketitle
	\section*{Abstract}
	In this article, we study the distribution of large quadratic character sums. Based on the recent work of Lamzouri, we obtain the structure results  of quadratic characters with large character sums which generalize the previous work of Bober, Goldmakher, Granville and Koukoulopoulos to the family of quadratic characters. In comparison, our error terms are better than theirs.
	\bigskip
	\section{\blue{Introduction}}

 Let $q$ be an integer and $\chi\pmod q$ be any Dirichlet character modulo $q$. Define the large character sum of $\chi$ by
 $$M(\chi):=\max_{t\le q}\bigg|\sum_{n\le t}\chi(n)\bigg|.$$
 The values of $M(\chi)$ plays a fundamental role in many areas of number theory, for example in the distribution of quadratic residues,  and particularly in the classical question of finding an upper bound for the least quadratic nonresidue of a large modulus (see \cite{Bob,BoGo16,GrMa}).
 We also define by $0<N_\chi<q$ the point where $M(\chi)$ arrives at the maximal size:
 $$M(\chi)=\bigg|\sum_{n\le N_\chi}\chi(n)\bigg|.$$
 When $|d|\le x$ is a fundamental discriminant, then there is a unique real primitive character modulo $|d|$, and we can define the corresponding character sums $M(\chi_d)$  and  $N_{\chi_d}$. The goal of this article is to gain insight into the structure of those quadratic characters $\chi_d$ for which $M(\chi_d)$ is large.

In 1918, P\'{o}lya and Vinogradov independently proved that
$$M(\chi) \ll \sqrt{q} \log q.$$
In 1977, assuming the Generalized Riemann Hypothesis (GRH), Montgomery and Vaughan \cite{MV2} proved that $$M(\chi)\ll \sqrt{q}\log_2q.$$ Here and throughout we shall denote by $\log_k$ the $k$-th iteration of the natural logarithm. This upper bound is the best possible up to a constant factor in view of Paley's \cite{Pa} omega result \eqref{Paley}. In 2007, Granville and Soundararajan \cite{GrSo2} refined Montgomery and Vaughan's conditional result by showing that 
\begin{equation}\label{UpperGRH}
 M(\chi)\leq \Big(2 C_{\pm}+o(1)\Big) \sqrt{q}\log_2 q,
 \end{equation}
 where $C_{-}={\rm e}^{\gamma}/\pi$ if $\chi$ is odd, and $C_{+}={\rm e}^{\gamma}/(\sqrt{3}\pi)$ if $\chi$ is even. Granville and Soundararajan \cite{GrSo2} conjectured that the true extreme values of $M(\chi)$ only differ by a factor 2, which means
\begin{equation}\label{ConjectureGrSo}
 M(\chi)\le\big(C_{\pm}+o(1)\big) \sqrt{q}\log_2 q, 
 \end{equation}
 for all primitive characters $\chi$. In view of this, there is not much space to improve their upper bound \eqref{UpperGRH}. However, there are some improvements for some special family of characters. For example, the breakthrough of Granville and Soundararajan \cite{GrSo2}  
for any character of fixed order, which were subsequently further improved by Goldmakher \cite{GOLD} and Lamzouri and Mangerel \cite{LaMa}.

For the omega results of $M(\chi)$, in 1932, Paley \cite{Pa} showed that there exist infinitely many moduli $q$ and quadratic characters $\chi\pmod q$ such that
\begin{align}\label{Paley}
  M(\chi)\gg\sqrt q\log_2q.  
\end{align}
This was improved by Bateman and Chowla \cite{BaCh} in 1950, who proved the existence of an infinite sequence of moduli $q$, and primitive quadratic characters $\chi \pmod q$, such that 
\begin{equation}\label{LowerOmega}
M(\chi) \geq  \Big(\frac{{\rm e}^{\gamma}}{\pi}+o(1)\Big) \sqrt{q}\log_2 q.
\end{equation}
In view of Granville and Soundararajan's conjecture \eqref{ConjectureGrSo}, this is the best possible up to the error term. Their result was extended to the family of primitive characters modulo a large prime $q$ by several authors (see for example, Theorem 3 of \cite{GrSo2}). For results of fixed order characters, we refer to \cite{GL1,GL2}.

Define the normalized large character sum 
 $$m(\chi):=\frac{M(\chi)}{\mme^\gamma\sqrt q/\pi},$$
 where $\gamma$ is the Euler constant. 
In 1979, Montgomery and Vaughan \cite{MV79}  studied the distribution of $m(\chi)$ over families of Dirichlet characters, and  showed that $m(\chi)$ is bounded 
  for most characters, which implies $M(\chi)\ll \sqrt{q}$ mostly often. Let $q$ be a large prime and 
$$ \Phi_q(\tau):= \frac{1}{\varphi(q)} \# \{\chi\neq \chi_0 \ (\bmod \ q) : m(\chi)>\tau\},$$
be the distribution function, where $\varphi(q)$ is Euler's totient function. It follows from Montgomery and Vaughan's work \cite{MV79} that 
$$ \Phi_q(\tau) \ll_A \tau^{-A}, $$ for any constant $A\geq 1$. 
This estimate was improved by Bober and Goldmakher \cite{BoGo} for fixed $\tau$, and subsequently by Bober, Goldmakher, Granville and Koukoulopoulos \cite{BGGK} who showed that uniformly for $2\leq \tau \leq \log_2 q-M$ (where $M\geq 4$ is a parameter) we have 
\begin{equation}\label{BGGK}
\exp\bigg(-\frac{\mme^{\tau+A_0-\eta}}{\tau}\big(1+O(E_1(\tau, M))\big)\bigg) \leq \Phi_q(\tau)\leq \exp\bigg(-\frac{\mme^{\tau-2-\eta}}{\tau}\big(1+O(E_2(\tau))\big)\bigg), 
\end{equation}
where $E_1(\tau, M)= (\log \tau)^2/\sqrt{\tau}+\mme^{-M/2}$, $E_2(\tau)=(\log \tau)/\tau$, $\eta:= \mme^{-\gamma}\log 2$, and $A_0=0.088546...$ is an explicit constant. The lower bound is highly based on the distribution of values of Dirichlet $L$-functions proved by Granville and Soundararajan \cite{GrSo06}. The proof for the upper bound is much more complex, which is based on the fact that the main contribution in P\'olya's Fourier expansion (i.e. Lemma \ref{polyatruc}) comes from the part of friable numbers for suitably large $y$:
\begin{align}\sum_{1\le|n|\le z}\frac{\chi(n)(1-\mme(n\alpha))}{n}\approx\sum_{1\le|n|\le z\atop n\in {\mathcal S(y)}}\frac{\chi(n)(1-\mme(n\alpha))}{n},\label{smoothinput}\end{align} where ${\mathcal S}(y)$ is defined in \eqref{friableset}.

Granville and Soundararajan \cite{GrSo2} showed when $M(\chi)$ gain large values then $\chi$ must ``pretend" to be a character of small conductor and opposite parity. While Bober, Goldmakher, Granville and Koukoulopoulos \cite{BGGK} showed that for almost all characters $\chi\pmod q$ with large values of $M(\chi)$, $\chi$ must be odd and pretend to be 1. We say multiplicative functions $f(\cdot)$ pretends to be $g(\cdot)$ if 
$${\mathbb D}(f,g;y)^2\ll\log_2 y,$$
where ${\mathbb D}(f,g;y)$ is defined by \eqref{distance}. Moreover, they also established accurate estimates for $\sum_{n\le N}\chi(n)$ for any positive number $N$. See \cite[Theorem 2.1-2.3]{BGGK}. The aim of this article is to generalize these results to the family of quadratic character sums, which will be presented later in Theorems \ref{thm1.1}-\ref{thm1.4}.

For the family of quadratic characters, Montgomery and Vaughan \cite{MV79} showed that for most primes $p\leq x$ we have $\max_{t}\Big|\sum_{n\leq t}\left(\frac{n}{p}\right)\Big|\ll \sqrt{p}$. 
Denote by $\mathcal{F}(x)$ the set of fundamental discriminants up to $x$
$$\mathcal{F}(x):=\{|d|\le x:\;d\;{\rm fundamental \;discriminant}\}.$$
Separate $\mathcal{F}(x)$ into two parts $\mathcal{F}^{\pm}(x)$ by whether the attached real  primitive character $\chi_d$ is odd or even. Define the distribution function by
$$\Psi^{\pm}_x(\tau):=\frac{1}{\#\mathcal{F}^{\pm}(x)}  \#\{d\in \mathcal{F}^{\pm}(x) : m(\chi_d)>\tau\}.$$
Uniformly for $\tau$ in the range $2\leq \tau\leq (1+o(1))\log\log x$ in the case of  $\Psi^{-}_x(\tau)$, and $2\leq \tau\leq (1/\sqrt{3}+o(1))\log\log x$ in the case of  $\Psi^{+}_x(\tau)$, Lamzouri \cite{La2022} showed the following results. The lower bounds are highly based on the distribution of values of $L(1,\chi_d)$ showed by Granville and Soundararajan \cite{GrSo}. The upper bounds rely on the result for quadratic character sums similar to \eqref{smoothinput}. More precisely, he established Lemma \ref{estimateforS} below, which is the main ingredient of \cite{La2022}. He used a quite different method (the quadratic large sieve) to prove it from the case of characters $\chi\pmod q$. His method can lead to an easier proof for the upper bound of \eqref{BGGK}, but in a slightly smaller range of $\tau$.
\begin{thm*}\cite[Theorem 1.1]{La2022}\label{Main}
Let $\eta= \mme^{-\gamma}\log 2$, and $x$ be a large real number. Uniformly for $\tau$ in the range $2\leq \tau \leq \log_2 x-\log_4 x+\log_5 x- C$ (where $C>0$ is a suitably large constant) we have 
$$ \exp\bigg(-\frac{\mme^{\tau-\eta
-B_0}}{ \tau} \bigg(1+O\bigg(\bigg{(\log \tau)^2 }{\sqrt{\tau}}\bigg)\bigg)\bigg)\leq  \Psi^{-}_x(\tau)\leq \exp\bigg(-\frac{\mme^{\tau-\eta -\log 2 - 2}}{ \tau} \bigg(1+O\bigg(\frac{\log \tau }{\tau}\bigg)\bigg)\bigg),$$
where 
\begin{equation}
B_0= \int_0^1 \frac{\tanh y}{y} dy + \int_1^{\infty} \frac{\tanh y-1}{y}dy=0.8187...
\end{equation}
\end{thm*}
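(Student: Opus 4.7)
The proof of Theorem A splits into a lower bound and an upper bound, each requiring a different toolkit.

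\emph{Lower bound.} The plan is to force $m(\chi_d)$ to be large by engineering $\chi_d$ to have abnormally large $L(1,\chi_d)$, and then count such $d$. For $d\in\mathcal F^{-}(x)$, the class number formula yields the identity
\begin{equation*}
\Big|\sum_{n\le|d|/2}\chi_d(n)\Big|=(2-\chi_d(2))\cdot\frac{\sqrt{|d|}}{\pi}L(1,\chi_d)=\frac{2\sqrt{|d|}}{\pi}\prod_{p>2}\Big(1-\frac{\chi_d(p)}{p}\Big)^{-1},
\end{equation*}
so $m(\chi_d)>\tau$ is implied by $\prod_{p>2}(1-\chi_d(p)/p)^{-1}>\mme^{\gamma}\tau/2$. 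It therefore suffices to count $d$ for which this truncated-at-$2$ Euler product exceeds the threshold. For this I would invoke the Granville--Soundararajan distribution theorem for $L(1,\chi_d)$ \cite{GrSo}: the tail of $\log L(1,\chi_d)$ matches to high precision the tail of $\log\prod_p(1-X_p/p)^{-1}$ where the $X_p$ are independent uniform $\pm1$'s, and the large-deviation constant at the corresponding threshold is precisely $B_0$; removing the $p=2$ factor of the product subtracts $\eta=\mme^{-\gamma}\log 2$ from the effective constant. This yields the claimed lower bound.

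\emph{Upper bound.} The plan is to combine P\'olya's Fourier expansion (Lemma \ref{polyatruc}) with the quadratic friable reduction of Lemma \ref{estimateforS}, the analogue of \eqref{smoothinput} in our setting. P\'olya's formula expresses $\sum_{n\le t}\chi_d(n)/\sqrt{|d|}$ as essentially the real part of the short character sum $\sum_{1\le n\le z}\chi_d(n)(1-\mme(n\alpha))/n$ with $\alpha=t/|d|$, so the event $m(\chi_d)>\tau$ forces this short sum to be of size $\gtrsim\mme^{\gamma}\tau$. I would then choose a friability parameter $y$ so that $\log_2 y=\tau-\log\tau+O(1)$, apply Lemma \ref{estimateforS} to replace the short sum by its restriction to $y$-friable integers up to admissible error, and expand the resulting friable sum into an Euler product over $p\le y$. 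Averaging a high moment of this Euler product over $d\in\mathcal F^{-}(x)$ via the quadratic large sieve and optimizing the moment order against Chebyshev's inequality then produces the stated upper bound, with the constant $-\log 2-2$ in the exponent tracking the $p=2$ correction and the saturation of the large sieve.

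\emph{Main obstacle.} The sharpness of the admissible range $\tau\le\log_2 x-\log_4 x+\log_5 x-C$ is dictated by the largest $y$ for which Lemma \ref{estimateforS} saves over the trivial bound; at the edge of this range $y$ is nearly $|d|^{1/2}$, where the quadratic large sieve is already tight, and any slack in the smoothing step or in the moment optimization would shrink the admissible $\tau$. A secondary obstacle is that passing from $L(1,\chi_d)$ to a truncated Euler product on the lower-bound side requires zero-density estimates for $L(s,\chi_d)$ of real characters to control exceptional discriminants, tracked quantitatively enough to extract the error term $(\log\tau)^2/\sqrt\tau$.
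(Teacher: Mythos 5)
This statement is not proved in the paper at all: it is quoted verbatim as Theorem 1.1 of \cite{La2022} and used as a black box, so there is no in-paper argument to compare your sketch against. For what it is worth, your outline is consistent with the strategy of Lamzouri's proof as the introduction summarizes it --- the lower bound via the class-number identity $\big|\sum_{n\le |d|/2}\chi_d(n)\big|=\tfrac{2\sqrt{|d|}}{\pi}\prod_{p>2}\big(1-\chi_d(p)/p\big)^{-1}$ combined with the Granville--Soundararajan large-deviation result for $L(1,\chi_d)$ \cite{GrSo}, and the upper bound via P\'olya's expansion together with the friable reduction of Lemma \ref{estimateforS} --- with the minor caveats that the quadratic large sieve is the engine behind Lemma \ref{estimateforS} itself rather than the final moment/Chebyshev step, and that the correct random model for this family has $X_p=\pm1$ with probability $p/(2(p+1))$ each and $X_p=0$ otherwise (which happens not to change the leading constant $B_0$).
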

\begin{thm**}\cite[Theorem 1.2]{La2022}\label{MainPositive}
Let $B_0$ be the constant in Theorem A. There exist positive constants $C_1$ and $C_2$ such that uniformly for $\tau$ in the range $2\leq \tau \leq (\log_2 x-\log_4 x+\log_5 x- C_1)/\sqrt{3}$ we have 
$$ \exp\bigg(-\frac{\mme^{\sqrt{3}\tau-B_0}}{\sqrt{3}\tau} \bigg(1+O\bigg(\frac{1 }{\tau}\bigg)\bigg)\bigg)\leq \Psi^+_x(\tau)\ll \exp\bigg(-\frac{\mme^{\sqrt{3}\tau}}{ \tau^{C_2}}\bigg).$$
\end{thm**}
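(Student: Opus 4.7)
The plan is to parallel the strategy behind Theorem A, while accounting for the fact that when $\chi_d$ is even one has $\chi_d(-1)=1$, which transforms the relevant half of P\'olya's Fourier expansion from a sine series into a $(1-\cos)$-series. Combining Lemma~\ref{polyatruc} with the friable approximation \eqref{smoothinput} (applied with a truncation parameter $y$ depending polynomially on $\mme^\tau$), one reduces the problem to controlling
$$\frac{\pi}{\sqrt{|d|}}\,M(\chi_d)\;\approx\;\max_{\alpha\in\mbr/\mbz}\,\bigg|\sum_{\substack{n\in\mathcal{S}(y)\\1\le n\le z}}\frac{\chi_d(n)\bigl(1-\cos(2\pi n\alpha)\bigr)}{n}\bigg|,$$
a quantity that is sandwiched between constant multiples of the truncated Euler product $\mathcal{L}(\chi_d,y):=\prod_{p\le y}(1-\chi_d(p)/p)^{-1}$.

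For the lower bound I would invoke the Granville--Soundararajan results on the joint distribution of $\{\chi_d(p)\}_{p\le y}$ over positive fundamental discriminants $d\in\mathcal{F}^+(x)$, in which these values behave like independent symmetric $\pm 1$ random variables. The extremizing $\alpha$ in the $(1-\cos)$-series concentrates the resonance on a specific subset of primes, and a saddle-point computation on the resulting constrained rate function produces the factor $\sqrt{3}$ in the exponent (reflecting the Granville--Soundararajan extremal constant $C_{+}=\mme^\gamma/(\sqrt{3}\pi)$ for even characters) and yields the claimed lower bound. For the upper bound the plan is to combine Lemma~\ref{estimateforS} (the quadratic large sieve estimate for friable twisted sums over quadratic characters) with a dyadic Markov-type argument: the inequality $m(\chi_d)>\tau$ forces $\mathcal{L}(\chi_d,y)>c\,\mme^\gamma\sqrt{3}\tau$ for at least one $y$ in a suitable range, and Lemma~\ref{estimateforS} then bounds the density of such $d$ in $\mathcal{F}^+(x)$.

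The main obstacle is the upper bound. Unlike the odd case, where the sine series is maximized at the clean choice $\alpha=\tfrac{1}{2}$ and the maximum is essentially $L(1,\chi_d)$, the $(1-\cos)$-expansion admits no canonical extremizer $\alpha$, so one must replace the supremum by the (strictly larger) sum of absolute values. This pointwise slack is precisely what prevents the upper bound from tracking the lower bound, and is why one obtains only the weaker exponent $\mme^{\sqrt{3}\tau}/\tau^{C_2}$ rather than a matching $\mme^{\sqrt{3}\tau}/(\sqrt{3}\tau)$. Closing this gap would require a finer structural understanding of the extremizing $\alpha$ for even $\chi_d$, which is beyond the reach of the present method.
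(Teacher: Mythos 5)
First, a point of order: the paper does not prove this statement. It is quoted verbatim as Theorem B from Lamzouri \cite[Theorem 1.2]{La2022}, so there is no internal proof to compare against; the introduction only records that the lower bound rests on the Granville--Soundararajan distribution of $L(1,\chi_d)$-values and the upper bound on Lemma \ref{estimateforS}. Judged on its own terms, your sketch names these two ingredients correctly but misidentifies the mechanism that produces the factor $\sqrt{3}$, which is the entire content of the even case. For an even character, largeness of the untwisted product $\mathcal{L}(\chi_d,y)=\prod_{p\le y}(1-\chi_d(p)/p)^{-1}$ is neither necessary nor sufficient for $m(\chi_d)>\tau$: if $\chi_d$ pretends to be $1$, the $(1-\cos)$-kernel annihilates the main term (this is the first case of Lemma \ref{rationalupper}, which gives $O((\log y)^{0.86})$ for even $\chi\bar{\xi}$), so your claim that $m(\chi_d)>\tau$ forces $\mathcal{L}(\chi_d,y)>c\,\mme^{\gamma}\sqrt{3}\tau$ is false. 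What is actually forced (cf.\ the proof of Theorem \ref{thm1.3}) is that $\chi_d$ pretends to be an \emph{odd} primitive character $\xi$ of conductor $D\ge 3$, and the second case of Lemma \ref{rationalupper} then caps the friable maximum by $(2\mme^{\gamma}/\sqrt{D})\log y\le(2\mme^{\gamma}/\sqrt{3})\log y$. Hence $m(\chi_d)>\tau$ can only be explained by the friable part once $\log y\gtrsim\sqrt{3}\tau$, so at $y\approx\mme^{\sqrt{3}\tau}$ the non-friable tail event $S_{y,x^{21/40}}(\chi_d)>\delta$ must occur, and Lemma \ref{estimateforS} bounds its frequency by $\exp(-c\,\mme^{\sqrt{3}\tau}/\tau^{C_2})$. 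A Markov-type argument on the untwisted Euler product would at best reproduce the odd-case exponent $\mme^{\tau}$ and cannot yield $\mme^{\sqrt{3}\tau}$.

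The lower bound has the matching gap. The extremal even characters have $b=D=3$ and $m(\chi_d)\approx\tfrac{\sqrt{3}}{2}\mme^{-\gamma}\bigl|L\bigl(1,\chi_d\bigl(\tfrac{\cdot}{3}\bigr)\bigr)\bigr|$, so one needs the Granville--Soundararajan large-deviation estimate for the \emph{twisted} value $L\bigl(1,\chi_d\bigl(\tfrac{\cdot}{3}\bigr)\bigr)$, plus the bookkeeping that converts $\bigl|L\bigl(1,\chi_d\bigl(\tfrac{\cdot}{3}\bigr)\bigr)\bigr|>\tfrac{2}{\sqrt{3}}\mme^{\gamma}\tau$ into the stated probability: because the Euler factor at $3$ is missing, the extremal event requires $\chi_d(p)=\bigl(\tfrac{p}{3}\bigr)$ for essentially all $p$ up to $\exp\bigl(\tfrac{3}{2}\cdot\tfrac{2}{\sqrt{3}}\tau\bigr)=\exp(\sqrt{3}\tau)$, and this $\tfrac{3}{2}\cdot\tfrac{2}{\sqrt{3}}=\sqrt{3}$ computation is precisely what your appeal to ``a saddle-point computation producing the factor $\sqrt{3}$'' omits. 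One must also exhibit a concrete test point $\alpha$ near $1/3$ at which the partial sum is provably large, not merely bound a maximum from above. As written, the proposal is a plausible programme, with the decisive even-character mechanism missing and the upper-bound step pointed at the wrong quantity.
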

Based on Lamzouri's results, we are able to show the structure of characters with large values of $M(\chi_d)$ as stated in \cite[Remark 1.5]{La2022}, which generalizes the results of Bober, Granville, Goldmakher and Koukoulopoulos \cite[Theorems 2.1-2.3]{BGGK}. However, our error terms are better due to the fact that quadratic characters only take  real values.
	\begin{thm}\label{thm1.1} Let $x$ be a large real number and $1\le\tau\le\log_2 x-\log_4x+\log_5x- C$ for a sufficiently large absolute constant $C$. There exists a set ${\mathcal C}_x(\tau)\subset  \{d\in{\mathcal F}(x): m(\chi_d)>  \tau\}$ of cardinality 
	$$
	\#{\mathcal C}_x(\tau) = \left(1+ O\left( \mme^{-\mme^\tau/\tau}\right)\right) 
	\cdot  \#  \left\{d\in\mathcal{F}(x) : m(\chi_d)>  \tau\right\}
	$$
	such that the following holds: If $d\in {\mathcal C}_x(\tau)$, then
	\begin{enumerate}
		\item $\chi_d$ is an odd character.
		\item Let $\alpha=N_{\chi_d}/|d|\in[0,1]$, and let $a/b$ be the reduced fraction for which $|\alpha-a/b|\le 1/(b\tau^{10})$ with $b\le\tau^{10}$. Let $b_0 =  b$ if $b$ is prime, and $b_0=1$ otherwise. Then 
		\begin{equation}\label{eq:logtau34}
		    \sum_{p\le e^\tau \atop p\neq b_0} \frac{1-\chi_d(p)}{p} \ll \sqrt{\frac{\log\tau}{\tau}}
		\end{equation}
		\begin{equation}\label{eq:mchidsqrttau}
		 m(\chi_d)  =\mme^{-\gamma}	\frac{b_0}{\phi(b_0)}\bigg|\prod_{p\neq b_0} \bigg(1-\frac{\chi_d(p)}{p}\bigg)^{-1}\bigg| 
			+  O\left(\sqrt{\tau\log\tau}\right) .
		\end{equation}
		\end{enumerate}
	\end{thm}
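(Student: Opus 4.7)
The plan is to transplant the strategy of Bober--Goldmakher--Granville--Koukoulopoulos \cite{BGGK} to the quadratic family, using Lamzouri's upper-bound machinery (Lemma \ref{estimateforS}) in place of the smoothing argument of \cite{BGGK}. Throughout, one exploits that $\chi_d(p)\in\{-1,0,+1\}$, which kills most of the imaginary-part cancellation and is what produces the improved error terms.

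\emph{Step 1 (even discriminants are negligible).} By Theorem B one has $\Psi^+_x(\tau)\leq \exp(-\mme^{\sqrt{3}\tau}/\tau^{C_2})$, while Theorem A gives $\Psi^-_x(\tau)\geq \exp(-\mme^{\tau-\eta-B_0}/\tau\cdot(1+o(1)))$. Since $\#\mathcal{F}^\pm(x)\asymp\#\mathcal{F}(x)$ and $\sqrt{3}>1$, the even discriminants contribute a proportion $\ll \exp(-\mme^{\tau}/\tau)$ of $\{d\in\mathcal{F}(x):m(\chi_d)>\tau\}$, which is within the tolerance allowed by $\mathcal{C}_x(\tau)$. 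Discarding them, one may assume $\chi_d$ is odd throughout what follows.

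\emph{Step 2 (P\'olya + friable reduction).} Lemma \ref{polyatruc} (P\'olya's Fourier expansion) gives
$$M(\chi_d)=\frac{\sqrt{|d|}}{2\pi}\bigg|\sum_{1\leq|n|\leq z}\frac{\chi_d(n)(1-\mme(-n\alpha))}{n}\bigg|+O\!\bigg(\frac{|d|}{z}\bigg),$$
with $\alpha=N_{\chi_d}/|d|$ and a suitable truncation $z$. Lemma \ref{estimateforS} then shows that, outside an exceptional set of cardinality $O(\#\mathcal{F}^-(x)\Psi^-_x(\tau)\mme^{-\mme^\tau/\tau})$, the inner sum equals its restriction to $y$-friable $n$ with $y=\mme^\tau$, up to an admissible error. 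On friables this sum factors as a partial Euler product in $\chi_d$ times a trigonometric factor in $\alpha$; the requirement $m(\chi_d)>\tau$ forces the product of these two factors to be essentially maximal in the range where Theorem A is sharp.

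\emph{Step 3 (structure of extremal $\alpha$ and $\chi_d$).} Near-maximality of the $\alpha$-factor $\sum_{p\leq y}(\chi_d(p)/p)(1-\cos(2\pi p\alpha))$ (using here that $\chi_d$ is real) forces $\alpha$ to lie within $1/(b\tau^{10})$ of a rational $a/b$ with $b\leq\tau^{10}$, since larger denominators produce Weyl-type cancellation incompatible with extremality. The dichotomy in the definition of $b_0$ arises naturally: when $p\equiv 0\pmod{b}$ the factor $1-\cos(2\pi pa/b)$ vanishes, so $\chi_d(b)$ is unconstrained when $b$ is prime (giving $b_0=b$), whereas for composite $b$ no single prime is singled out (giving $b_0=1$). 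For every other prime $p\leq \mme^\tau$, a second-moment estimate via the quadratic large sieve on the set of $d$ where $\sum_{p\leq \mme^\tau,\,p\neq b_0}(1-\chi_d(p))/p$ is large delivers \eqref{eq:logtau34} with the claimed $\sqrt{(\log\tau)/\tau}$ bound. Plugging the resulting $(\alpha,\chi_d)$ back into the friable factorization and isolating the $b_0$-contribution yields \eqref{eq:mchidsqrttau}.

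\emph{Main obstacle.} The hard part is Step 3: one must control the joint exceptional set where \emph{either} the rational approximation of $\alpha$ fails \emph{or} \eqref{eq:logtau34} fails, while keeping it below the tolerance $\#\mathcal{F}(x)\Psi^-_x(\tau)\mme^{-\mme^\tau/\tau}$. This requires carefully tracking the interaction between the $\alpha$-factor and the Euler product when $\chi_d(p)\neq +1$ for a nontrivial set of small primes, and replacing the orthogonality-based arguments available for a fixed conductor in \cite{BGGK} by the quadratic large sieve. The gain to $\sqrt{(\log\tau)/\tau}$ in \eqref{eq:logtau34} (compared with $(\log\tau)^2/\sqrt{\tau}$ in \cite{BGGK}) and the acceptable error $\sqrt{\tau\log\tau}$ in \eqref{eq:mchidsqrttau} reflect exactly the simplification afforded by the reality of $\chi_d$.
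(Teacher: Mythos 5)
There is a genuine gap, concentrated in your Step 3. First, the assertion that on friable integers the sum $\sum_{n}\chi_d(n)(1-\mme(-n\alpha))/n$ ``factors as a partial Euler product in $\chi_d$ times a trigonometric factor in $\alpha$'' is false: the twisted sum $\sum_{n\in\mcs(y)}\chi_d(n)\mme(n\alpha)/n$ does not factor, and the quantity you then analyze, $\sum_{p\le y}(\chi_d(p)/p)(1-\cos(2\pi p\alpha))$, is a sum over primes that does not arise from any correct decomposition of the sum over integers. Handling the twisted sum is precisely what Lemmas \ref{alphaapprox}, \ref{MVupper}, \ref{rationalupper} and \ref{remove exp cor} are for: one replaces $\alpha$ by $a/b$, and then either $b$ is large and Lemma \ref{MVupper} makes the twisted piece negligible, or $b$ is small and Lemma \ref{rationalupper} bounds it by $(\log y)\cdot 2\mme^{\gamma}D^{-1/2}(2/3)^{\omega(b/D)}$ unless $D\mid b$ and $\chi_d\bar\xi$ is odd; comparing with the lower bound $2\mme^{\gamma}\tau-O(\log\tau)$ forced by $m(\chi_d)>\tau$ is what yields both the oddness of $\chi_d$ and the constraints on $b$. (Also note that the existence of $a/b$ with $b\le\tau^{10}$ and $|\alpha-a/b|\le 1/(b\tau^{10})$ is automatic from Dirichlet's theorem; extremality does not need to ``force'' it.)

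Second, your mechanism for \eqref{eq:logtau34} --- a second-moment/quadratic-large-sieve estimate over $d$ --- cannot work, because it points in the wrong direction: for a typical $d$ the sum $\sum_{p\le \mme^{\tau}}(1-\chi_d(p))/p$ has size $\asymp\log\tau$, so a variance bound only shows that few $d$ deviate from $\log\tau$, whereas \eqref{eq:logtau34} demands the much smaller value $\sqrt{(\log\tau)/\tau}$ for \emph{every} $d\in\mcc_x(\tau)$. The bound is deterministic, not statistical: the paper first proves \eqref{eq:mchidsqrttau} in the weaker form $m(\chi_d)=\mme^{-\gamma}\frac{b_0}{\phi(b_0)}|L_{b_0}(1,\chi_d)|+O(\sqrt{\tau E})$ via Lemma \ref{friablelfct} (after relating $L_{b_0}(1,\chi_d)$ to its $y$-friable truncation by a M\"obius argument using $S_{y,x^{21/40}}(\chi_d)\le 1$), then notes that $m(\chi_d)>\tau$ together with Mertens' theorem forces the Euler product $\prod_{p\le y,\,p\ne b_0}(1-\chi_d(p)/p)^{-1}(1-1/p)$ to equal $1+O(\sqrt{E/\tau})$, takes logarithms to get $\sum_{p\le y,\,p\ne b_0}(1-\chi_d(p))/p\ll\sqrt{E/\tau}$, and finally bootstraps: this smallness gives $E\ll\log\tau$, which reinserted yields both \eqref{eq:logtau34} and \eqref{eq:mchidsqrttau} with the stated errors. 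Your Step 1 (discarding even discriminants by comparing Theorems A and B) is a legitimate alternative to the paper's derivation of oddness, though you would need monotonicity of $\Psi^+_x$ to cover $\tau$ beyond the range $\tau\le(\log_2x-\log_4x+\log_5x-C_1)/\sqrt3$ of Theorem B; but without repairing Step 3 the proof of the two displayed estimates does not go through.
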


Let $\mme(x):=\mme^{2\pi{\rm i}x}$. For any character $\chi\pmod q$, denote by ${\mathcal G}(\chi)$ the Gauss sum
$${\mathcal G}(\chi):=\sum_{n\le q}\chi(n)\mme(n/q).$$
 
	\begin{thm}\label{thm1.2} Let $x, \tau, \alpha, a, b, b_0$ and $\chi_d\in {\mathcal C}_x(\tau)$ be as in Theorem \ref{thm1.1}. Given $\beta\in[0,1]$,  let $k/\ell$ be the reduced fraction for which $|\beta-k/\ell|\le 1/(\ell \tau^{10})$ with $\ell\le\tau^{10}$. Define $u>0$ by $|\beta-k/\ell|=1/(\ell \mme^{\tau u})$.
		\begin{enumerate}
			\item If $b_0=1$, then
			\[
			\frac{\mme^{-\gamma}\pi \mmi}{{\mathcal G}(\chi_d)} \sum_{n\le \beta |d|} \chi_d(n)
			= \begin{cases}
				 \tau \big(1-P(u)\big)
				+ O( (\tau\log\tau)^{\fot}) 
				&\mbox{if $\ell=1$},\\
				 \tau
				+  O( (\tau\log\tau)^{\fot} )
				&\mbox{if $\ell>1$} .
			\end{cases}
			\]
			\item If $b_0=b$, then
			\[
			\frac{\mme^{-\gamma}\pi \mmi}{{\mathcal G}(\chi_d)} \sum_{n\le \beta |d|} \chi_d(n)
			= \lambda\tau\frac{1-b\inv}{1- {\chi}_d(b)b\inv}+ O( (\tau\log\tau)^{\fot}) ,
			\]
			where
			\[
			\lambda =   \begin{cases}
				1-P(u)  &\mbox{if $\ell=1$}, \\ 
				 1+ P(u) \left(\frac{ {\chi}_d(b)}b\right)^{v-1} \frac{1- {\chi}_d(b)}{b-1} 
				&\mbox{if $\ell=b^v$, $v\ge1$}, \\ 
				1
				&\mbox{otherwise}.
			\end{cases}
			\]
			Moreover, if we take $\alpha=\beta$ and write $|\alpha-a/b|=1/(b\mme^{\tau u_0})$, then we have that 
			\begin{equation}\label{eq:pu0tau}
		(1-P(u_0)) \cdot \frac{|1-\chi_d(b)|^2}{b^2} \ll (\log\tau)^{\fot}\tau^{-\fot}.
		\end{equation}
		\end{enumerate}
	\end{thm}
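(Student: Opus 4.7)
The plan is to combine P\'olya's Fourier expansion (Lemma~\ref{polyatruc}) with the structural input from Theorem~\ref{thm1.1}, then carry out an Euler-product analysis in the spirit of Bober--Goldmakher--Granville--Koukoulopoulos~\cite{BGGK}. Since $\chi_d$ is odd and real by Theorem~\ref{thm1.1}(1), pairing $n$ and $-n$ in the truncated Fourier expansion at length $z=\mme^{O(\tau)}$ collapses the two-sided sum to the one-sided expression
\[
\frac{\mme^{-\gamma}\pi\mmi}{\mathcal{G}(\chi_d)}\sum_{n\le\beta|d|}\chi_d(n)
= 2\mme^{-\gamma}\sum_{1\le n\le z}\frac{\chi_d(n)\sin^2(\pi n\beta)}{n} + \text{(negligible error)},
\]
reducing the problem to evaluating this $\sin^2$-weighted sum.

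Next I would invoke Lamzouri's Lemma~\ref{estimateforS}, the quadratic analogue of the friable approximation~\eqref{smoothinput}, to restrict the summation to $y$-friable integers with $y=\mme^\tau$. Applying the prime bound~\eqref{eq:logtau34}, I then replace $\chi_d(p)$ by $1$ for every prime $p\ne b_0$ in the Euler factorization of the friable sum, at an additive cost of $O((\tau\log\tau)^{\fot})$ as in the proof of~\eqref{eq:mchidsqrttau}. The resulting quantity has the form $\sum_n w(n)\sin^2(\pi n\beta)/n$, where $w$ is totally multiplicative, supported on $y$-friable integers, equal to $1$ off $b_0$, and equal to $\chi_d(b)^{v_b(n)}$ at powers of $b$ when $b_0=b$.

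The analytic evaluation of this reduced sum is essentially the computation of \cite[Theorem~2.2]{BGGK} specialized to the weight~$w$. Expanding $\sin^2(\pi n\beta)=\fot-\fot\cos(2\pi n\beta)$, substituting the Diophantine data $\beta=k/\ell+O(1/(\ell\mme^{\tau u}))$, and using Mertens' asymptotic $\prod_{p\le\mme^\tau}(1-1/p)\inv\sim\mme^\gamma\tau$, one extracts a main term of size $\tau$ multiplied by a depletion factor depending only on the Diophantine type of $\beta$. This depletion equals $1-P(u)$ if $\ell=1$; it equals $1$ when $\ell>1$ and $\ell\notin\{b^v:v\ge1\}$; and it equals the modified expression defining $\lambda$ when $b_0=b$ and $\ell=b^v$. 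The prefactor $(1-b\inv)/(1-\chi_d(b)b\inv)$ in part~(2) arises as the quotient of the Mertens-approximate local factor $(1-1/b)\inv$ by the true Euler factor $(1-\chi_d(b)/b)\inv$ at $p=b$.

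Finally, the bound~\eqref{eq:pu0tau} is deduced by specializing $\beta=\alpha$ in part~(2). Since $|\mathcal{G}(\chi_d)|=\sqrt{|d|}$, the absolute value of the left-hand side of the formula in part~(2) equals $m(\chi_d)$ exactly. Simplifying the Euler product in~\eqref{eq:mchidsqrttau} via Mertens and the prime estimate~\eqref{eq:logtau34} gives $m(\chi_d)=\tau+O((\tau\log\tau)^{\fot})$, so equating this with $|\lambda|\tau(b-1)/|b-\chi_d(b)|$ yields $|\lambda|(b-1)=|b-\chi_d(b)|+O(((\log\tau)/\tau)^{\fot})$. Squaring and inserting the explicit form $\lambda=1+P(u_0)(1-\chi_d(b))/(b-1)$ produces a factored inequality whose only nontrivial content is that $(1-P(u_0))\cdot|1-\chi_d(b)|^2/b^2\ll((\log\tau)/\tau)^{\fot}$, which is \eqref{eq:pu0tau}. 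The main technical obstacle is the bookkeeping at $p=b$ when $b_0=b$: the coupling between the $b$-adic valuation of $n$ and the frequency $n\beta$ inside $\sin^2(\pi n\beta)$ must be tracked carefully, and this is precisely what produces the case split for $\lambda$ when $\ell=b^v$. The improvement over the error terms in~\cite{BGGK} stems from the vanishing of imaginary parts in our sums (because $\chi_d$ is real), which eliminates a logarithmic loss present in their analysis.
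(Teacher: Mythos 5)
Your proposal follows essentially the same route as the paper's proof: P\'olya's formula combined with the oddness and realness of $\chi_d$ from Theorem \ref{thm1.1} to collapse to a one-sided friable sum, replacement of $\chi_d(p)$ by $1$ away from $b_0$ at additive cost $O((\tau\log\tau)^{\fot})$ via \eqref{eq:logtau34}, the BGGK-style Mertens/Dickman evaluation with the decomposition $n=b^sm$ tracking the interaction between the $b$-adic valuation and the frequency $k/\ell$, and finally comparison with $m(\chi_d)>\tau$ to deduce \eqref{eq:pu0tau}. The only cosmetic differences are that you write the paired sum with a $\sin^2(\pi n\beta)$ weight where the paper keeps $1-\cos(2\pi nk/\ell)$, and that the restriction to friable $n$ comes from the defining condition $S_{y,x^{21/40}}(\chi_d)\le 1$ of $\mathcal{C}_x(\tau)$ rather than directly from Lemma \ref{estimateforS}.
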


	\begin{thm}\label{thm1.3}  Let $d$ be a fundamental discriminant and $1\le\tau\le \frac{1}{\sqrt{3}} (\log_2x-\log_4x+\log_5x- C)$ for a sufficiently large constant $C$. There exists a set ${\mathcal C}_x^+(\tau)\subset  \{d\in {\mathcal F}^+(x):\ m(\chi_d)>  \tau\}$ of cardinality 
	$$
			\#{\mathcal C}_x^+(\tau) = \left(1+ O\left( \mme^{-\mme^\tau/\tau}\right)\right) 
			\cdot  \#  \left\{d\in {\mathcal F}^+(x) : \chi_d(-1)=1,\  m(\chi_d)>  \tau\right\}
		$$
		for which the following statements hold. If $d\in {\mathcal C}_x^+(\tau)$ then
		\begin{enumerate}
			\item If $\alpha=N_{\chi_d}/|d|\in[0,1]$, and $a/b$ is the reduced fraction for which $|\alpha-a/b|\le 1/(b\tau^{10})$ with $b\le\tau^{10}$, then $b=3$.
			\item We have that
			$$
				\sum_{\substack{p\le \mme^\tau \\ p\neq 3}} \frac{({\frac{p}{3})-\chi_d(p)}}{p} \ll { \frac{\log\tau}{\tau} },$$
    and
			$$
				m(\chi) 
				= \frac{\mme^{-\gamma}\sqrt{3}}{2} \bigg|{L\left(1,\chi_d\Big(\frac{\cdot}{3}\Big)\right) }\bigg| + O(\log \tau) .
			$$
		\end{enumerate}
   \end{thm}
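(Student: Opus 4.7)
The plan is to proceed by analogy with the proof of Theorem~\ref{thm1.1}, replacing the role of Theorem~A (odd case) by Theorem~B (even case), and exploiting the fact that for even primitive quadratic characters the extremal ``pretentious target'' is the Legendre symbol $(\cdot/3)$ rather than the trivial character. Starting from P\'olya's Fourier expansion and the friable truncation of \eqref{smoothinput}, we reduce the estimation of $M(\chi_d)$ to an Euler product over primes $p\le\mme^\tau$. Writing $\alpha=N_{\chi_d}/|d|$ and $a/b$ for the reduced approximation with $b\le\tau^{10}$, this Euler product decomposes into a local part at primes dividing $b$ and a global part of the shape $L(1,\chi_d\psi)$ for a primitive character $\psi$ of conductor dividing $b$. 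Since $\chi_d$ is even, the global part vanishes unless $\psi$ is odd; in particular, we must have $b\ge 3$, ruling out the analogue of the $b_0=1$ branch of Theorem~\ref{thm1.1}. A direct computation of the extremal constants shows that the leading factor $\mme^{-\gamma}\sqrt{3}/2$ appearing in Theorem~B is attained only when $(b,\psi)=(3,(\cdot/3))$, and for every other admissible pair the Euler-product maximum is strictly smaller by a multiplicative constant bounded away from one. Comparing the upper bound of Theorem~B against a slightly shifted version of itself then shows that the set of $d\in\mathcal{F}^+(x)$ with $m(\chi_d)>\tau$ but $b\neq 3$ has density at most $\exp(-\mme^\tau/\tau)$ inside $\{d\in\mathcal{F}^+(x):m(\chi_d)>\tau\}$. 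Removing this exceptional set produces $\mathcal{C}_x^+(\tau)$ and establishes part~(1).

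Part~(2) is extracted by imposing near-optimality on the Euler product: if $d\in\mathcal{C}_x^+(\tau)$, the local factor at~$3$ must be close to its extremal value and the pretentious distance $\sum_{p\le\mme^\tau,\,p\neq 3}((p/3)-\chi_d(p))/p$ must be $O(\log\tau/\tau)$, since otherwise the Euler product drops below the threshold $\mme^{-\gamma}\sqrt{3}\,\tau/2$ required to sustain $m(\chi_d)>\tau$. Relating the truncated Euler product to the full $L$-value via Mertens's theorem then yields the claimed formula for $m(\chi_d)$ with error $O(\log\tau)$, which is sharper than the $O(\sqrt{\tau\log\tau})$ error of Theorem~\ref{thm1.1} because $\chi_d$ is real-valued and $L(1,\chi_d(\cdot/3))$ is itself real, so the phase-oscillation losses present in the general Dirichlet-character case do not appear here.

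The main obstacle is the density transfer in the first paragraph, that is, quantitatively showing that the non-optimal choices $b\neq 3$ cost an exponential factor of size $\exp(\mme^\tau/\tau)$. This requires calibrating the upper bound of Theorem~B at two slightly shifted thresholds and comparing it with a lower bound coming from the Granville-Soundararajan distribution of $L(1,\chi_d)$ values over $\mathcal{F}^+(x)$, while carefully tracking the dependence on the parameter $u$ measuring $|\alpha-a/b|$ in the spirit of Theorem~\ref{thm1.2}. Once this transfer is in place, the pretentious-distance bound and the $L$-value estimate follow from routine Euler-product/Mertens manipulations with inputs of the quality provided by Lamzouri's quadratic-large-sieve argument.
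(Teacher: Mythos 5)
Your overall framework (P\'olya expansion, friable truncation, Euler-product/pretentious analysis) points in the right direction, but the mechanism you propose for part (1) --- and which you yourself flag as ``the main obstacle'' --- is not how the argument goes, and as described it has a genuine gap. You want to rule out $b\neq 3$ by a \emph{density transfer}: calibrating the upper bound of Theorem~B at two shifted thresholds against a lower bound coming from the distribution of $L(1,\chi_d)$. But the upper and lower bounds in Theorem~B do not match closely enough for this (the upper bound is $\ll\exp(-\mme^{\sqrt3\tau}/\tau^{C_2})$ with an unspecified exponent $C_2$ and no matching constant in front of $\mme^{\sqrt3\tau}$), and in any case no such statistical argument is needed. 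The correct construction is to take $y=\mme^{\sqrt3\tau+c}$ and define $\mathcal{C}_x^+(\tau)$ as the set of $d\in\mathcal{F}^+(x)$ with $m(\chi_d)>\tau$ \emph{and} $S_{y,x^{21/40}}(\chi_d)\le 1$; the exceptional set is controlled by Lemma~\ref{estimateforS} (Lamzouri's quadratic large sieve), and its size is compared with the lower bound of Theorem~B to obtain the relative error $O(\mme^{-\mme^\tau/\tau})$. Once $d$ lies in this set, $b=3$ holds for \emph{every} such $d$, deterministically: by Lemma~\ref{alphaapprox} one has $2\mme^\gamma\tau\lesssim\big|\sum_{1\le|n|\le N,\,n\in\mathcal S(y)}\chi_d(n)\mme(na/b)/n\big|$, and Lemma~\ref{rationalupper} bounds this sum by $(\log y)\cdot\frac{2\mme^\gamma}{\sqrt D}(2/3)^{\omega(b/D)}+O((\log y)^{0.86})$ in the only nontrivial case $D\mid b$ with $\chi_d\bar\xi$ odd. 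Since $\chi_d$ is even, $\xi$ must be odd, hence $D\ge 3$; with $\log y=\sqrt3\tau+c$ the resulting inequality $\sqrt{3/D}\,(2/3)^{\omega(b/D)}\ge 1$ forces $D=3$ and $\omega(b/D)=0$, i.e.\ $b=3$. Your proposal never invokes this pointwise Hal\'asz-type bound, and without it the claim that non-optimal $b$ costs a factor $\exp(-\mme^\tau/\tau)$ is unsupported.

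For part (2) your heuristic (near-optimality of the Euler product forces small pretentious distance to $(\tfrac{\cdot}{3})$) is essentially the paper's argument: one squeezes $\tau<m(\chi_d)\le\frac{\sqrt3}{2\mme^\gamma}\sum_{3\nmid n\in\mathcal S(y)}\frac1n+O(\log\tau)=\tau+O(\log\tau)$ using the identity $\mme(an/3)-\mme(-an/3)=\mmi\sqrt3(\tfrac{a}{3})(\tfrac{n}{3})$ together with Lemma~\ref{friablenumber} and Mertens, whence the Euler-product ratio is $1+O(\log\tau/\tau)$ and both displayed estimates follow by taking logarithms and truncating. So that half is fine in outline, but it still rests on first establishing $b=3$ by the pointwise argument above rather than by the density comparison you sketch.
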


	\begin{thm}\label{thm1.4}
			 Given $\beta\in[0,1]$,  let $k/\ell$ be the reduced fraction for which $|\beta-k/\ell|\le 1/(\ell \tau^{10})$ with $\ell\le\tau^{10}$. Define $u>0$ by $|\beta-k/\ell|=1/(\ell \mme^{\sqrt{3}\tau u})$. Then
			\[
			\frac{\mme^{-\gamma}\pi}{{\mathcal G}(\chi_d)} \sum_{n\le \beta |d|} \chi_d(n)
			= \begin{cases}  \tau P(u)\cdot\Big( \frac{k}{3} \Big)\cdot 
				\frac{{\chi}_d(3^{v-1}) }{3^{v-1}}  
				+ O( \log \tau)    &\mbox{if $\ell=3^v$ for some $v\ge1$}, \\ 
				\\
				O( \log \tau)    &\mbox{otherwise}.
			\end{cases}
			\]
	\end{thm}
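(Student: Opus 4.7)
The plan is to parallel the proof of Theorem~\ref{thm1.2}, with the even real character $\psi_3:=(\cdot/3)$ supplied by Theorem~\ref{thm1.3} playing the role that the trivial character played there. First I would feed $\beta|d|$ into P\'olya's Fourier expansion (Lemma~\ref{polyatruc}) to express
\[
\frac{\mme^{-\gamma}\pi}{\mathcal{G}(\chi_d)}\sum_{n\le \beta|d|}\chi_d(n)
\]
as a truncated Dirichlet-type sum; since $\chi_d(-1)=+1$ for $d\in\mathcal{C}_x^+(\tau)$, pairing $n$ with $-n$ collapses it into a sine series $-\mme^{-\gamma}\sum_{1\le n\le y}\chi_d(n)\sin(2\pi n\beta)/n$ with $y=\mme^{c\sqrt 3\tau}$ for a suitable constant $c$. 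The smoothing input \eqref{smoothinput}, together with Lemma~\ref{estimateforS}, then restricts $n$ to $y$-friable integers with negligible loss.

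Next, Theorem~\ref{thm1.3}(2) tells me that for $d\in\mathcal{C}_x^+(\tau)$ the character $\chi_d$ pretends to be $\psi_3$ with $\mathbb{D}(\chi_d,\psi_3;\mme^\tau)^2\ll\log\tau$ (the $\ell^2$-estimate follows from the stated $\ell^1$-bound since $\psi_3(p)-\chi_d(p)\in\{-2,0,2\}$). Decomposing $n=3^j m$ with $(m,3)=1$, I have $\chi_d(n)=\chi_d(3^j)\chi_d(m)$, and I would replace $\chi_d(m)$ by $\psi_3(m)$ inside the $m$-sum, paying only an $O(\log\tau)$ error through the Cauchy--Schwarz/Plancherel step from Lamzouri~\cite{La2022}. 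This reduces matters to evaluating
\[
-\mme^{-\gamma}\sum_{j\ge 0}\frac{\chi_d(3^j)}{3^j}\sum_{\substack{m\le y/3^j\\(m,3)=1\\ m\in\mathcal{S}(y)}}\frac{\psi_3(m)\sin(2\pi\cdot 3^j m\beta)}{m}.
\]

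Because $\psi_3$ has period $3$, the inner sine sum resonates only when $3^j\beta$ has denominator exactly $3$, i.e.\ when $\beta\sim k/\ell$ with $\ell=3^v$, $v\ge 1$, and $j=v-1$; the Gauss-sum relation for $\psi_3$ then supplies the $(k/3)$ factor. At this resonant index the Dirichlet polynomial in $m$ has the same shape as in the proof of Theorem~\ref{thm1.2}, producing the weight $P(u)$ with $u$ determined by $|\beta-k/\ell|=1/(\ell\mme^{\sqrt 3\tau u})$, while the outer coefficient $\chi_d(3^{v-1})/3^{v-1}$ and the overall $\tau$ come out of the $j$-sum collapsing to $j=v-1$. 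For $\ell$ not a power of $3$, or for $j\neq v-1$, orthogonality of $\psi_3$ against the additive character $m\mapsto\mme(3^j m\beta)$ on integers coprime to $3$ leaves only $O(\log\tau)$.

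The main technical obstacle is obtaining the error $O(\log\tau)$, sharper than the $O((\tau\log\tau)^{\fot})$ in Theorem~\ref{thm1.2}. This sharpening is exactly what the stronger pretending in Theorem~\ref{thm1.3} gives, once one carefully tracks how it propagates through the friable/Euler-product reduction; the gain over Theorem~\ref{thm1.2} squares the gain of Theorem~\ref{thm1.3} over Theorem~\ref{thm1.1}, which itself stems from Lamzouri's sharper Theorem~B and the real-valuedness of $\chi_d$. The boundary cases where $v$ is close to $\log\tau/\log 3$ (so that $3^v$ approaches the friable cutoff) require the most care, but their treatment mirrors the corresponding step in the proof of Theorem~\ref{thm1.2}.
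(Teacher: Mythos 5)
Your proposal follows essentially the same route as the paper's proof: P\'olya's expansion (Lemma~\ref{polyatruc}) plus the friable restriction built into $\mathcal{C}_x^+(\tau)$ and Lemma~\ref{alphaapprox}, the decomposition $n=3^jm$ with $3\nmid m$, replacement of $\chi_d(m)$ by $\big(\frac m3\big)$, and the resonance analysis isolating $j=v-1$ via Lemma~\ref{friablenumber} against Lemma~\ref{remove exp cor} for the non-resonant terms. One quantitative point needs fixing: the bound you quote, $\mathbb{D}\big(\chi_d,\big(\tfrac{\cdot}{3}\big);\mme^\tau\big)^2\ll\log\tau$, is not strong enough on its own --- fed through the Euler product over $p\le y$ with $\log y\asymp\tau$ it would only give a replacement error of order $\sqrt{\tau\log\tau}$, exactly as in Theorem~\ref{thm1.2}. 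What the proof of Theorem~\ref{thm1.3} actually supplies (see \eqref{logtauerr}) is the prime sum $\sum_{3\neq p\le y}\big(1-\chi_d(p)\big(\frac p3\big)\big)/p\ll(\log\tau)/\tau$, and it is this extra factor of $1/\tau$, multiplied back by $\prod_{p\le y}(1-1/p)^{-1}\asymp\tau$, that produces the $O(\log\tau)$ error; your closing remark about the ``stronger pretending'' shows you are aware of this, but the displayed distance bound should be corrected accordingly. Relatedly, the signed sum in the statement of Theorem~\ref{thm1.3}(2) does not formally imply the absolute-value version you need; one should invoke the nonnegative form $1-\chi_d(p)\big(\frac p3\big)=\big|\chi_d(p)-\big(\frac p3\big)\big|$ established in that proof.
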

	This article is organized as follows. We will present some preliminary lemmas in \S\ref{sec2}. We will prove Theorems \ref{thm1.1}-\ref{thm1.4} separately in   \S \ref{sec3}-\ref{sec6}.

	\section{\blue{Preliminary Lemmas}}\label{sec2}
In this section, we give some lemmas that we will use later. The first one is called the P\'olya truncation formula. 

\begin{lem}\label{polyatruc}Let $\chi\pmod q$ be any Dirichlet character and $\alpha<1$. Then we have
$$\sum_{n\le\alpha q}\chi(n)=\frac{\mathcal G(\chi)}{2\pi \mmi}\sum_{1\le|n|\le z}\frac{\overline\chi(n)(1-\mme(-n\alpha))}{n}+O\bigg(1+\frac{q\log q}{z}\bigg).$$
\end{lem}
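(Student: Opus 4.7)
The plan is to imitate the three-step argument used for Theorem \ref{thm1.2} in the odd case, now adapted to real even $\chi_d$ by replacing $1-\mme(-n\alpha)$ with $2\mmi\sin(2\pi n\beta)$, and exploiting the sharper structural information from Theorem \ref{thm1.3}, namely that $b_0=3$ and that $\chi_d$ pretends to $(\cdot/3)$ at the small distance squared $\ll\log\tau/\tau$.

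First, apply Lemma \ref{polyatruc} at the point $\beta|d|$ with a polynomially large truncation $z$. Because $\chi_d$ is real and even on $\mathcal{C}_x^+(\tau)$, pairing $n$ with $-n$ collapses the two-sided sum to
\[
\frac{\mme^{-\gamma}\pi}{\mathcal{G}(\chi_d)}\sum_{n\le\beta|d|}\chi_d(n)=\mme^{-\gamma}\sum_{1\le n\le z}\frac{\chi_d(n)\sin(2\pi n\beta)}{n}+o(1).
\]
Second, truncate the inner sum to $y$-friable integers with $y=\mme^{\sqrt{3}\tau}$ via Lamzouri's quadratic-large-sieve friable reduction (the engine behind the upper bound in Theorem B), whose tail is $O(\log\tau)$ on $\mathcal{C}_x^+(\tau)$. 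I then decompose each $n\in\mathcal{S}(y)$ as $n=3^a w$ with $\gcd(w,3)=1$, peel off the factor $\chi_d(3)^a$, and use the pretention bound from Theorem \ref{thm1.3}(2) together with a Lipschitz-type inequality for pretentious multiplicative functions to replace $\chi_d(w)$ by $(w/3)$. This substitution produces an error of order $\mathbb{D}(\chi_d,(\cdot/3);\mme^\tau)^2\cdot\log y\ll(\log\tau/\tau)\cdot\tau=O(\log\tau)$, leaving the sum
\[
T(\beta):=\mme^{-\gamma}\sum_{\substack{n=3^aw\in\mathcal{S}(y),\ n\le z\\ \gcd(w,3)=1}}\frac{\chi_d(3)^a(w/3)\sin(2\pi n\beta)}{n}.
\]

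Third, write $\beta=k/\ell+\delta$ with $|\delta|=1/(\ell\mme^{\sqrt{3}\tau u})$, split $\sin(2\pi n\beta)=\sin(2\pi nk/\ell)+O(n|\delta|)$, and absorb the $O$-piece via a Rankin-style estimate on $y$-friable integers up to $1/|\delta|$. A case analysis of $\sin(2\pi nk/\ell)$ then controls what remains. If $\ell$ is not a power of $3$, expanding $\sin(2\pi nk/\ell)$ via Dirichlet characters modulo $\ell$ and using orthogonality against the mod-$3$ structure of $(w/3)$ shows that every character pairing contributes $O(\log\tau)$, giving the second branch of the claim. If $\ell=3^v$ with $v\ge1$, then $\sin(2\pi\cdot 3^awk/3^v)=\sin(2\pi wk/3^{v-a})$ vanishes for $a\ge v$; for $0\le a\le v-2$ the full-period identity $\sum_{w\bmod 3^{v-a}}(w/3)\sin(2\pi wk/3^{v-a})=0$ combined with a friable-tail bound yields $O(\log\tau)$; and for $a=v-1$ the clean identity $\sin(2\pi wk/3)=\frac{\sqrt{3}}{2}(k/3)(w/3)$ collapses the $w$-sum to the Mertens-type sum $\sum_{\gcd(w,3)=1,\ 3^{v-1}w\le\min(z,1/|\delta|),\ w\in\mathcal{S}(y)}1/w$ with prefactor $(\chi_d(3)/3)^{v-1}(k/3)$. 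Evaluating this Mertens sum with cutoff ratio $\log(1/|\delta|)/\log y=u+O(1/\tau)$, and normalising by $\mme^{-\gamma}\cdot\sqrt{3}/2$, produces the stated main term $\tau P(u)\cdot(k/3)\cdot\chi_d(3^{v-1})/3^{v-1}$.

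The main obstacle is the orthogonality step for $0\le a\le v-2$ inside the sub-case $\ell=3^v$: one must show that the twisted partial sums over $y$-friable integers $w$ coprime to $3$, weighted by $(w/3)\sin(2\pi wk/3^{v-a})$, really are $O(\log\tau)$ uniformly in $v$ and $a$, which requires combining vanishing of the full-period average with a quantitative decay estimate on the friable tail. The corresponding substitution step is cleaner here than in Theorem \ref{thm1.2} precisely because Theorem \ref{thm1.3} furnishes the sharper pretention distance $\ll\log\tau/\tau$ (rather than $\sqrt{\log\tau/\tau}$ in the odd case), which is exactly what permits the final error to be $O(\log\tau)$ rather than $O(\sqrt{\tau\log\tau})$.
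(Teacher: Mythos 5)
Your proposal does not address the statement at all. The statement to be proved is Lemma \ref{polyatruc}, the truncated P\'olya Fourier expansion
\[
\sum_{n\le\alpha q}\chi(n)=\frac{\mathcal G(\chi)}{2\pi \mmi}\sum_{1\le|n|\le z}\frac{\overline\chi(n)(1-\mme(-n\alpha))}{n}+O\Big(1+\frac{q\log q}{z}\Big),
\]
a classical analytic identity for an arbitrary Dirichlet character, with no reference to quadratic characters, friable truncations, or the sets $\mathcal{C}_x^{\pm}(\tau)$. What you have written is instead an outline of the proof of Theorem \ref{thm1.4} (the structure result for even $\chi_d$ with $\ell=3^v$ versus $\ell$ not a power of $3$). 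Worse, your very first step is ``apply Lemma \ref{polyatruc}'', so as an argument for the lemma itself it is circular: you assume the statement you were asked to prove and then derive a different theorem from it.

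For the record, the lemma is not proved in the paper either; the authors simply cite Montgomery--Vaughan, \emph{Multiplicative Number Theory I}, p.~311, Eq.~(9.19). A self-contained proof would run along entirely different lines from your sketch: expand the indicator function of the interval $[0,\alpha]$ (equivalently, complete $\chi(n)$ via the Gauss sum identity $\chi(n)\mathcal{G}(\overline\chi)=\sum_{a\bmod q}\overline\chi(a)\mme(an/q)$ for primitive $\chi$), use the truncated Fourier series of the sawtooth function to write $\sum_{n\le\alpha q}\chi(n)$ as the two-sided sum over $1\le|n|\le z$ of $\overline\chi(n)(1-\mme(-n\alpha))/n$ times $\mathcal{G}(\chi)/(2\pi\mmi)$, and control the truncation error by the standard bound for incomplete exponential sums, which is where the $O(1+q\log q/z)$ term comes from. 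None of the machinery you invoke (pretentious distance to $(\cdot/3)$, friable decompositions $n=3^aw$, Lemma \ref{remove exp cor}, the Dickman function) plays any role in this lemma; it belongs to the later sections that \emph{use} the lemma.
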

\begin{proof}
This is \cite[p.311, Eq. (9.19)]{MVbook}.
\end{proof}

Let $P_+(n)$ be the largest prime factor of $n$.
Denote by ${\mathcal S}(y)$ the set of $y$-friable numbers
\begin{align}{\mathcal S}(y):=\{n\in{\mathbb Z}^*:P_+(n)\le y\}\label{friableset}
\end{align}
and 
$$S_{y,z}(\chi):=\max_{\alpha\in[0,1]}\bigg|\sum_{n\le z\atop n\notin {\mathcal S}(y)}\frac{\chi(n)\mme(n\alpha)}{n}\bigg|.$$
Lamzouri \cite{La2022} gives the distribution of $S_{y,z}(\chi_d)$ for quadratic characters $\chi_d$.

\begin{lem}\label{estimateforS}
 Let $x$ be large. There exists a constant $c>0$ such that for all real numbers $2\leq y\leq c\log x\log_4 x/(\log_3x)$  and $1/\log y\leq \delta\leq 4$, we have
$$\frac{1}{\#{\mathcal F}(x)}\#\{d\in{\mathcal F}(x):S_{y,x^{21/40}}(\chi_d)>\mme^\gamma\delta\} \ll \exp\left(-\frac{\delta^2y}{2 \log y} \left(1+O\left(\frac{\log_2 y}{\log y}+ \frac{\log _4x}{\delta\log_2x\log_3x}\right)\right)\right).$$

\end{lem}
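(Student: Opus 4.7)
The plan is to prove the lemma by a high-moment method combined with the quadratic large sieve and Markov's inequality, followed by discretization of the maximum over $\alpha$.

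First I would pass from the supremum $\max_{\alpha\in[0,1]}$ to a discrete maximum. Since $T_\alpha(\chi):=\sum_{n\le z,\,n\notin\mathcal{S}(y)}\chi(n)\mme(n\alpha)/n$ (with $z=x^{21/40}$) is a trigonometric polynomial in $\alpha$ of degree $\le z$ with $\ell^1$-bounded coefficients, Bernstein's inequality allows me to replace the supremum by a discrete maximum over a grid of spacing $1/z^{C}$ with only a bounded multiplicative loss. A final union bound over the resulting $z^{O(1)}$ grid points will be absorbed by the target exponential, since $\log z\ll\log x$ is much smaller than the claimed exponent on the relevant range of $\delta$ and $y$.

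For fixed $\alpha$, I would compute the $2k$-th moment
$$\Sigma_{2k}(\alpha):=\sum_{d\in\mathcal{F}(x)}|T_\alpha(\chi_d)|^{2k},$$
expanding $|T_\alpha|^{2k}=T_\alpha^k\,\overline{T_\alpha}^k$ and using $\overline{\chi_d}=\chi_d$ to obtain a sum over $2k$-tuples $(n_1,\ldots,n_{2k})$ of $\chi_d(n_1\cdots n_{2k})$ against an oscillatory coefficient. Writing $N=n_1\cdots n_{2k}$ with squarefree part $N_0$, one has $\sum_{d\in\mathcal{F}(x)}\chi_d(N)\asymp x\cdot\mathbf{1}[N_0=1]$, with an off-diagonal error controlled by the Heath-Brown quadratic large sieve giving $\ll(xN)^{o(1)}\sqrt{x+N}$. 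The diagonal main term, from tuples whose product is a perfect square, is estimated by a Wick-type pairing argument in which the $n_i$ are grouped into $k$ pairs sharing a common large prime factor; a careful combinatorial analysis yields an upper bound of the shape $x\cdot k!\,V^{k}$ for an effective variance $V$ of order $\log y/y$, with lower-order corrections from partial or higher-order coincidences absorbed into a factor $1+O(\log_2 y/\log y)$.

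Markov's inequality then yields $\mathbf{P}(|T_\alpha(\chi_d)|>\mme^\gamma\delta)\le\Sigma_{2k}(\alpha)/(\#\mathcal{F}(x)\cdot\mme^{2k\gamma}\delta^{2k})$. Optimizing $k\asymp\delta^2/V\asymp\delta^2 y/\log y$, I obtain the exponent $-\delta^2 y/(2\log y)(1+O(\log_2 y/\log y))$, while the hypothesis $y\le c\log x\log_4 x/\log_3 x$ ensures that the off-diagonal contribution from the large sieve stays strictly below the diagonal at this choice of $k$, producing the remaining error $\log_4 x/(\delta\log_2 x\log_3 x)$. A union bound over the discretized grid of $\alpha$ values then completes the argument.

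The main obstacle is the explicit control of the moment $\Sigma_{2k}(\alpha)$ with all error terms tracked simultaneously. One must execute the Wick pairing expansion while bounding the higher-order coincidence terms (three or more $n_i$ sharing a large prime factor) and the contributions of tuples where some $n_i$ is genuinely composite, and one must keep the Heath-Brown off-diagonal contribution small at the optimal $k$. This delicate balance is what forces the weaker-than-Gaussian exponent: the true Gaussian variance $\sigma^2=\sum_{n\le z,\,n\notin\mathcal{S}(y)}1/n^2\asymp 1/(y\log y)$ would suggest the bound $\exp(-\delta^2 y\log y/2)$, but the off-diagonal constraint on $k$, combined with the combinatorial contribution of composites, inflates the effective variance to $\log y/y$ and yields the claimed $\exp(-\delta^2 y/(2\log y))$.
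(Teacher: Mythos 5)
The paper does not actually prove this lemma: it is imported verbatim as \cite[Theorem 2.1]{La2022}, and the moment-plus-quadratic-large-sieve strategy you outline is indeed the general shape of Lamzouri's method. However, as a proof your sketch fails at its very first step, the reduction of $\max_{\alpha\in[0,1]}$ to a grid of spacing $1/z^{C}$ followed by a union bound over $z^{O(1)}=x^{O(1)}$ points. You assert this union bound is absorbed because ``$\log z\ll\log x$ is much smaller than the claimed exponent,'' but the inequality goes the other way: in the stated range $y\le c\log x\log_4x/\log_3x$ and $\delta\le 4$, the claimed exponent satisfies $\delta^2y/(2\log y)\ll \log x\log_4x/(\log_2x\log_3x)=o(\log x)$, whereas the grid has $\exp(c'\log x)$ points (and no essentially sparser grid can capture the maximum of a trigonometric polynomial of degree $z=x^{21/40}$, since one needs spacing of order $\delta/z$). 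So the union bound multiplies the per-point probability $\exp(-\delta^2y/(2\log y))$ by $x^{O(1)}$ and yields a trivial estimate. The maximum over $\alpha$ cannot be decoupled from the family average this cheaply; note also that it cannot simply be pushed inside the moment expansion, because a maximizing $\alpha_d$ depends on $d$ and then the off-diagonal coefficients are no longer $d$-independent, which is exactly what the quadratic large sieve requires. Handling $\max_\alpha$ (for instance through a structural reduction of the maximizing point to rationals with small denominator, or through a majorant independent of $\alpha$) is one of the genuinely delicate points of the actual proof, and it is missing here.

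Even at fixed $\alpha$, the diagonal analysis is asserted rather than carried out, and the heuristic in your closing paragraph misidentifies the mechanism. The inflation of the effective variance from $\sum_{n\le z,\,n\notin\mathcal S(y)}n^{-2}\asymp 1/(y\log y)$ to $\asymp \log y/y$ has nothing to do with the off-diagonal constraint on $k$: it occurs already in the random multiplicative model, and comes from the $y$-friable cofactors of the non-friable integers, i.e.\ the sum behaves like $\bigl(\sum_{m\in\mathcal S(y)}\chi_d(m)/m\bigr)\bigl(\sum_{p>y}\chi_d(p)/p\bigr)$, so the relevant tail event is that the prime tail has size about $\delta/\log y$ against variance $\asymp 1/(y\log y)$, giving $\exp(-\delta^2y/(2\log y))$. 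Extracting the precise constant $\tfrac12$ and the factor $1+O(\log_2y/\log y)$, while controlling higher-order coincidences, the admissible range of $k$ against the large-sieve off-diagonal term (this is where the hypothesis $y\le c\log x\log_4x/\log_3x$ and the error $\log_4x/(\delta\log_2x\log_3x)$ enter), and the maximum over $\alpha$, is precisely the content of \cite[Theorem 2.1]{La2022}; as written, your proposal does not reach it.
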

\begin{proof}
This is \cite[Theorem 2.1]{La2022}.
\end{proof}

The following two estimates are classical results of sums over friable numbers.

\begin{lem}\label{friablesum}
   For $y\ge2$ and $u\ge1$, we have
   $$\sum_{n>y^u\atop n\in {\mathcal S}(y)}\frac 1n\ll\frac{\log y}{u^u}+\mme^{-\sqrt{\log y}}.$$
\end{lem}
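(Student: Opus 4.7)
The plan is to convert the tail sum into an integral via Abel summation, then split into (i) a main contribution handled by the Hildebrand--Tenenbaum asymptotic for counts of smooth numbers and (ii) a far tail handled by Rankin's trick.

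Setting $\Psi(t, y) := \#\{n \le t : n \in \mcs(y)\}$, integration by parts (using $\Psi(t,y)/t \to 0$, valid by Rankin) gives
\[
\sum_{\substack{n > y^u \\ n \in \mcs(y)}} \frac{1}{n} \;=\; -\frac{\Psi(y^u, y)}{y^u} + \int_{y^u}^\infty \frac{\Psi(t, y)}{t^2}\, dt \;\le\; \int_{y^u}^\infty \frac{\Psi(t, y)}{t^2}\, dt.
\]
Fix $u_1 = \mme^{\sqrt{\log y}}$ and set $T := y^{u_1}$. For $y$ sufficiently large, $u_1 \le y^{1/2 - \varepsilon}$, so the Hildebrand--Tenenbaum asymptotic $\Psi(t,y) = t\rho(\log t/\log y)\bigl(1 + O(1/\sqrt{\log y})\bigr)$ is uniform on $[y^u, T]$. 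I would split the integral at $T$ as $I_1 + I_2$.

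For $I_1$: the substitution $v = \log t/\log y$ yields $I_1 \ll \log y \int_u^{u_1} \rho(v)\, dv$. The classical bound $\rho(v) \ll v^{-v}$ (valid for $v \ge 1$) combined with the elementary estimate $\int_u^\infty v^{-v}\, dv \ll u^{-u}$ (from $v^{-v} \le u^{-v}$ on $[u,\infty)$) gives $I_1 \ll \log y/u^u$. For $I_2$: Rankin's bound $\Psi(t,y) \le t^s \prod_{p \le y}(1 - p^{-s})^{-1}$ with $s = 1 - 1/\sqrt{\log y}$ yields
\[
I_2 \;\le\; \prod_{p \le y}(1 - p^{-s})^{-1} \cdot \int_T^\infty t^{s-2}\, dt.
\]
A Mertens-type computation bounds the Euler product by $\exp\bigl(O(\mme^{\sqrt{\log y}} \log\log y)\bigr)$, while the integral equals $\sqrt{\log y}\cdot T^{s-1} = \sqrt{\log y}\cdot\exp(-\sqrt{\log y}\cdot \mme^{\sqrt{\log y}})$. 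The latter factor utterly dominates the Euler product, producing $I_2 \ll \mme^{-\sqrt{\log y}}$.

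The main difficulty lies in the coordinated choice of the split threshold $u_1$ and the Rankin exponent $s$: one must push $u_1$ just far enough to remain inside the range of validity of the Hildebrand asymptotic, while keeping $T^{s-1}$ small enough to swallow the Euler product and recover exactly the stated $\mme^{-\sqrt{\log y}}$ secondary term. Since the lemma is a direct consequence of the classical smooth-number theory of de Bruijn, Hildebrand and Tenenbaum, in practice a reference (e.g.\ to Tenenbaum's \emph{Introduction to analytic and probabilistic number theory}) suffices in the paper.
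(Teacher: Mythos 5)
The paper does not actually prove this lemma; it simply cites \cite[Lemma 3.2]{BGGK}, so there is no in-paper argument to compare against. Your proof is correct and is essentially the standard derivation one would write out: partial summation to pass to $\int_{y^u}^{\infty}\Psi(t,y)t^{-2}\,dt$, the Dickman approximation on the near range giving $\log y\int_u^{\infty}\rho(v)\,dv\ll \log y\cdot u^{-u}$, and Rankin's trick with $s=1-1/\sqrt{\log y}$ on the far range, where $T^{s-1}=\exp(-\sqrt{\log y}\,\mme^{\sqrt{\log y}})$ indeed crushes the Euler product $\exp(O(\mme^{\sqrt{\log y}}\log\log y))$. Two small points to tighten. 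First, the range of validity you quote for the Hildebrand asymptotic is not the right one: the uniform range is $1\le v\le \exp((\log y)^{3/5-\varepsilon})$ (equivalently $y\ge\exp((\log\log t)^{5/3+\varepsilon})$), not $v\le y^{1/2-\varepsilon}$; your choice $u_1=\mme^{\sqrt{\log y}}=\exp((\log y)^{1/2})$ sits comfortably inside the correct range, so nothing breaks, but the justification as written is off. Second, the argument as a whole requires $y$ large (both for the Hildebrand range and for $\sqrt{\log y}\gg\log\log y$ in the $I_2$ comparison), and the elementary bound $\int_u^\infty v^{-v}dv\le u^{-u}/\log u$ degenerates as $u\to 1^+$; both are disposed of trivially (for bounded $y$ the entire sum $\sum_{n\in\mcs(y)}1/n=\prod_{p\le y}(1-1/p)^{-1}$ is $O(1)\ll \mme^{-\sqrt{\log y}}$, and for $1\le u\le 2$ one has $u^{-u}\gg1$), but a complete write-up should say so. With those remarks, citing Tenenbaum or \cite{BGGK} directly, as the paper does, is of course also legitimate.
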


\begin{proof}
This is \cite[Lemma $3.2$]{BGGK}.
\end{proof}

\begin{lem}\label{friablenumber}
   For $y\ge2$ and $u>0$, we have
   $$\sum_{n\le y^u\atop n\in {\mathcal S}(y)}\frac 1n=\mme^\gamma 
 P(u)\log y+O(1),$$
 where $$P(u):=\mme^{-\gamma}\int_{0}^{u}\rho(t)dt$$ 
 with $\rho(\cdot)$ the Dickman-de Bruijn function.
 In particular, $\lim_{u\to\infty}P(u)=1$
\end{lem}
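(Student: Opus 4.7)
The plan is to establish the estimate by partial summation combined with a quantitative form of the Dickman-de Bruijn theorem. Let $\Psi(t,y):=\#\{n\le t : P_+(n)\le y\}$ denote the counting function of $y$-friable integers.

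First, I would apply Abel summation to write
\[
\sum_{\substack{n\le y^u \\ n\in\mathcal{S}(y)}}\frac{1}{n} = \frac{\Psi(y^u,y)}{y^u} + \int_1^{y^u}\frac{\Psi(t,y)}{t^2}\,dt,
\]
and then substitute $t=y^v$ to rewrite the integral as $\log y \cdot \int_0^u \Psi(y^v,y)/y^v\,dv$. The boundary term is trivially $O(1)$.

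Next, I would invoke Hildebrand's sharp form of the Dickman--de Bruijn asymptotic: uniformly in a wide range of $v$ and $y$,
\[
\Psi(y^v,y) = y^v\rho(v)\biggl(1+O\bigl(\tfrac{\log(v+1)}{\log y}\bigr)\biggr).
\]
Inserting this yields a main term $\log y\int_0^u\rho(v)\,dv = \mme^{\gamma}P(u)\log y$, while the error contribution is
\[
O\biggl(\int_0^u\rho(v)\log(v+1)\,dv\biggr)=O(1)
\]
uniformly in $u$, thanks to the super-exponential decay of $\rho$. For $v\in[0,1]$ one directly has $\Psi(y^v,y)=\lfloor y^v\rfloor$ and $\rho(v)=1$, so no honest error is produced there.

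For $u$ so large that Hildebrand's estimate is no longer efficient, I would switch to the complementary identity
\[
\sum_{\substack{n\le y^u \\ n\in\mathcal{S}(y)}}\frac{1}{n} = \prod_{p\le y}\Bigl(1-\tfrac{1}{p}\Bigr)^{-1} - \sum_{\substack{n> y^u \\ n\in\mathcal{S}(y)}}\frac{1}{n},
\]
bounding the Euler product by Mertens' theorem as $\mme^\gamma\log y+O(1)$ and controlling the tail via Lemma \ref{friablesum}. Matching this against the fact that $\mme^\gamma(1-P(u))\log y$ decays at the same super-exponential rate gives the desired $O(1)$ error in this regime. The claim $\lim_{u\to\infty}P(u)=1$ then follows by letting $u\to\infty$ in the asymptotic and comparing with Mertens, which recovers the classical identity $\int_0^\infty\rho(v)\,dv=\mme^\gamma$.

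The main obstacle will be ensuring that the error stays $O(1)$ \emph{uniformly} in $u$ across the full range $u>0$, which requires carefully splicing the Hildebrand-based argument (effective for moderate $u$) with the Mertens-plus-tail argument (effective for large $u$) at the transition. The rapid decay of $\rho(v)$ — faster than any power of $v^{-1}$ — makes this matching routine but is the one point that must be checked with care.
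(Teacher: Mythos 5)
Your argument is sound, but note that the paper does not actually prove this lemma: it simply quotes \cite[Lemma 3.3]{BGGK}, so any honest proof you give is ``different'' from the paper in the trivial sense that the paper outsources it. Your route --- Abel summation to reduce to $\log y\int_0^u \Psi(y^v,y)y^{-v}\,dv$, Hildebrand's estimate $\Psi(y^v,y)=y^v\rho(v)\bigl(1+O(\log(v+1)/\log y)\bigr)$ in the moderate range, and the complementary identity with Mertens' theorem plus the Rankin-type tail bound of Lemma \ref{friablesum} for very large $u$ --- is the standard way to establish this estimate, and the two regimes overlap comfortably (Hildebrand is valid for $u$ up to roughly $\exp((\log y)^{3/5-\varepsilon})$, while the Mertens-plus-tail argument already works once $u\log u\gg\log_2 y$), so the splice you worry about is indeed routine. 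Two small points you should make explicit to be airtight: (i) for bounded $y$ (where Hildebrand's asymptotic range of $u$ degenerates) both sides of the asserted formula are $O(1)$, so that case is trivial; (ii) in the large-$u$ regime the step ``$\mme^\gamma(1-P(u))\log y=O(1)$'' silently uses the classical identity $\int_0^\infty\rho(t)\,dt=\mme^\gamma$, so either quote that identity up front or first derive it by comparing your two evaluations in the overlap region (e.g.\ at $u=\log_2 y$) and then use it; as written, the final sentence risks sounding circular, though the circularity is easily dissolved. With those clarifications your proposal is a complete and correct substitute for the citation.
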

\begin{proof}
This is \cite[Lemma 3.3]{BGGK}.
\end{proof}

In order to prove Theorem \ref{thm1.1}, we need to estimate the difference between the sum of $\frac{\chi(n)\mme(n\alpha)}{n}$ and the sum of $\frac{\chi(n)\mme(na/b)}{n}$ with $a/b$ close to $\alpha$. When $b$ is small, we have the following result.
 \begin{lem}\label{alphaapprox}
Let $y\ge2$, $z\ge(\log y)^5$, $\chi$ be a Dirichlet character, $\alpha\in\R$ and $B\in[(\log y)^5,z]$. Let $a/b$ be a reduced fraction with $b\le B$ and $|\alpha-a/b|\le 1/(bB)$. Then
\[
\sum_{1\le |n| \le z \atop n\in {\mathcal S}(y)} \frac{\chi(n)\mme(n \alpha)}{n}
	= \sum_{1\le |n| \le N \atop n\in {\mathcal S}(y)} \frac{\chi(n)\mme(n a/b)}{n}
		+ O(\log B) ,
\]
where $N=\min\{z,|b\alpha - a|^{-1}\}$.
\end{lem}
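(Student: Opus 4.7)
Set $\beta := \alpha - a/b$, so $|\beta| \le (bB)^{-1}$, and note that $N = \min\{z, |b\beta|^{-1}\}$ satisfies $N \ge B$. The plan is to split the sum at $|n|=N$ and treat the two resulting pieces separately.

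On the range $1 \le |n| \le N$, we have $|n\beta| \le N|\beta| \le 1/b \le 1$, so $\mme(n\beta) = 1 + O(|n\beta|)$. Factoring $\mme(n\alpha) = \mme(na/b)\mme(n\beta)$, the cost of replacing $\mme(n\alpha)$ by $\mme(na/b)$ on this range is bounded by
$$ O\Bigl(|\beta|\,\#\{|n|\le N : n \in \mathcal{S}(y)\}\Bigr) \le O(N|\beta|) = O(1/b) = O(1),$$
which is absorbed by $O(\log B)$. If $N = z$, the proof is complete.

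Otherwise, $N = |b\beta|^{-1} < z$, and the remaining task is to show that the tail
$$T := \sum_{N < |n| \le z,\; n\in \mathcal{S}(y)} \frac{\chi(n)\mme(n\alpha)}{n}$$
satisfies $|T| \ll \log B$. I would partition $(N, z]$ into dyadic intervals $(M, 2M]$, and on each interval apply Abel summation with the slowly oscillating factor $\mme(n\beta)$ (whose derivative has size $|\beta|$). This reduces the bound on $T$ to uniform control of the dyadic partial sums
$$W(t) := \sum_{M < n \le t,\; n \in \mathcal{S}(y)} \chi(n)\mme(na/b),\qquad M < t \le 2M.$$
For the $W(t)$ estimate, expand $\mme(na/b)$ for $n$ coprime to $b$ using Gauss sums,
$$\mme(na/b) = \frac{1}{\varphi(b)} \sum_{\psi \bmod b} \overline{\psi}(a)\,\tau(\psi)\,\psi(n),$$
so that $\chi(n)\mme(na/b)$ becomes a linear combination of products $(\chi\psi)(n)$, each a Dirichlet character modulo $[q,b]$; the contribution from $n$ sharing a factor with $b$ is a lower-order correction handled by a direct count of friables. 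Standard P\'olya--Vinogradov-type estimates for character sums over friable integers then bound each piece, producing a saving over the trivial count on $(M, 2M]$.

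The main obstacle is precisely this uniform dyadic bound on $W(t)$: the geometric sum of dyadic errors must telescope to $O(\log B)$ rather than $O(\log z)$, which could be substantially larger. This requires per-scale savings strong enough to accommodate all $M \in [N, z/2]$, and exploits the constraint $|\beta| \le (bB)^{-1}$ in conjunction with cancellation in character sums over friables. Feeding these $W$-bounds back into the Abel summation and summing dyadically then yields $|T| \ll \log B$, which combined with the easy Step 1 estimate completes the proof.
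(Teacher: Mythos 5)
Your first step is fine: on $1\le|n|\le N$ one has $|n(\alpha-a/b)|\le N|\alpha-a/b|\le 1/b$, so replacing $\mme(n\alpha)$ by $\mme(na/b)$ costs $O(N|\alpha-a/b|)=O(1)$, and indeed $N\ge B$ so the case $N=z$ is complete. Note, for the record, that the paper does not prove this lemma at all: it is quoted from Granville--Soundararajan (Lemma 6.2) and Goldmakher (Lemma 4.1), so the only thing to assess is whether your treatment of the tail $T=\sum_{N<|n|\le z,\,n\in\mathcal{S}(y)}\chi(n)\mme(n\alpha)/n$ actually works.

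It does not, and the gap is exactly at the point you flag as "the main obstacle". Your scheme bounds each dyadic block by Abel summation against $\mme(n\beta)/n$, which costs $\ll M^{-1}(1+|\beta|M)\max_{t}|W(t)|$ on $(M,2M]$ with $|\beta|M=M/(bN)=2^j/b$ growing; to make the blocks sum to $O(\log B)$ over $j\le\log_2(z/N)$ you need $\max_t|W(t)|\ll bN\log B/\log(z/N)$, i.e.\ a bound on $W$ that does not grow with $M$ at all. No such bound is available, for two independent reasons. First, the modulus of $\chi$ can be as large as $x$ while $t$ ranges down to $N\ge B\ge(\log y)^5$, which in the paper's application is of size $(\log_2 x)^{O(1)}$; P\'olya--Vinogradov and every unconditional character-sum estimate are vacuous there, and there is no "standard P\'olya--Vinogradov-type estimate for character sums over friable integers" giving cancellation in this range. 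Second, and worse, the bound you need is false precisely in the cases the lemma is used for: when $m(\chi_d)$ is large, $\chi_d$ is $1$-pretentious up to $y$, and then the principal-like component in your Gauss-sum expansion gives $W(2M)$ a genuine main term of order $M\rho(\log M/\log y)/\sqrt{b}$, which exceeds any $M$-independent threshold long before the Dickman factor can rescue you. So the cancellation in $T$ does not live at the level of individual dyadic blocks of the unweighted sums $W(t)$, and the scheme cannot close. In the cited proofs the tail is instead controlled by the Montgomery--Vaughan machinery for exponential sums with multiplicative coefficients (the same engine behind Lemma \ref{MVupper}), which exploits the multiplicative structure of friable integers via bilinear forms together with the quality of the rational approximations to $\alpha$ at scales beyond $N$, rather than any cancellation in $\sum_{n\le t}(\chi\psi)(n)$. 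As written, your Step 2 is a plan that defers the entire difficulty to an estimate that is unavailable and in the relevant cases untrue, so the proof is incomplete.
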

\begin{proof}
See \cite[Lemma 6.2] {GrSo2} or \cite[Lemma 4.1]{GOLD}.
\end{proof}

When $b$ is large, we have a similar estimation.

\begin{lem}\label{MVupper} 
Let $|\alpha-a/b|\le 1/b^2$, where $(a,b)=1$. For all $z, y\ge 3$, we have that
\[
\sum_{\substack{n \le z \\n\in {\mathcal S}(y)}} \frac{\chi(n)\mme(n \alpha)}{n}
	\ll \log b+ \log_2 y +\frac{(\log b)^{5/2}}{\sqrt{b}} \log y .
\]
\end{lem}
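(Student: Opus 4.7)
The plan is to reduce this friable exponential sum to classical Vinogradov-type estimates for bilinear sums, via partial summation and a combinatorial decomposition that extracts a prime factor from each friable $n$.

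First, by partial summation, setting $A(t) := \sum_{n \le t,\, n \in \mathcal{S}(y)} \chi(n)\, \mme(n\alpha)$, we have
\[
\sum_{\substack{n \le z \\ n \in \mathcal{S}(y)}} \frac{\chi(n)\, \mme(n\alpha)}{n} = \frac{A(z)}{z} + \int_1^z \frac{A(t)}{t^2}\, dt,
\]
so it suffices to bound $A(t)$ uniformly in $t$, with an equivalent dyadic reformulation breaking the sum into pieces on $[N,2N]$ and summing over $N=2^j\le z$. To bound $A(t)$ I would decompose the friability condition via a Buchstab-type identity: for each $y$-friable $n\ge 2$, factor $n = pm$ with $p = P_+(n)$ the largest prime factor, so that $P_+(m) \le p$. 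This yields
\[
A(t) = O(1) + \sum_{p \le y} \chi(p) \sum_{\substack{m \le t/p \\ P_+(m) \le p}} \chi(m)\, \mme(pm\alpha),
\]
a bilinear form in primes $p$ and smooth cofactors $m$. For each fixed $p$, the inner exponential sum has frequency $p\alpha$ satisfying $|p\alpha - pa/b| \le p/b^2$, corresponding to a rational approximation with (reduced) denominator $b/(p,b)$.

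The third step is to bound the bilinear form by Cauchy-Schwarz (to remove the smoothness constraint on $m$) followed by the classical estimate
\[
\sum_{n \le N} \min\!\left(M,\, \frac{1}{\|n\alpha\|}\right) \ll \left(\frac{MN}{b} + M + N + b\right)\log(2bMN),
\]
which yields Vinogradov-type savings with factor $1/\sqrt{b}$. The pieces then assemble as follows: the $\log_2 y$ term comes from Mertens' estimate $\sum_{p \le y} 1/p = \log_2 y + O(1)$ applied in the outer sum; the $\log b$ term arises from the small-prime regime, where the elementary bound $\sum_{n \le z} \mme(n\alpha)/n \ll \log b$ (valid under the hypothesis $|\alpha - a/b| \le 1/b^2$) suffices; and the factor $(\log b)^{5/2}(\log y)/\sqrt{b}$ comes from the Vinogradov bilinear estimate combined with the contribution of order $\log y$ from summation over primes $p \le y$.

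The principal obstacle I anticipate is the precise tracking of the power of $\log b$: obtaining the clean exponent $5/2$ rather than a larger power requires a careful Cauchy-Schwarz step and optimization of the splitting parameters between the Type I and Type II ranges. A secondary subtlety is the treatment of primes $p$ sharing a large common factor with $b$, for which the effective denominator of $p\alpha$ degenerates; such primes are sparse, with relative density $O(1/b)$, and must be handled separately by trivial bounds so as not to spoil the $1/\sqrt{b}$ savings in the final estimate.
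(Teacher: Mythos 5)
The paper offers no proof of this lemma: it is quoted verbatim from Corollary~2.2 of Goldmakher's paper, which in turn is a partial-summation consequence of the Montgomery--Vaughan estimate $\sum_{n\le N}f(n)\mme(n\alpha)\ll N/\log N+N(\log R)^{3/2}/\sqrt{R}$, valid for completely multiplicative $|f|\le 1$ and $R\le b\le N/R$. Your architecture --- partial summation, extraction of a prime factor to create a bilinear form, Cauchy--Schwarz followed by $\sum_{n\le N}\min(M,\|n\alpha\|^{-1})\ll(MN/b+M+N+b)\log(2bMN)$ --- is exactly the engine behind that estimate, and the structural point that makes the friable sum accessible (namely that $\chi\cdot\mathbf{1}_{\mathcal{S}(y)}$ is completely multiplicative and bounded by $1$, so the friability condition needs no separate treatment) is implicit in your setup. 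So the route is the right one.

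That said, three steps do not go through as written. (i) Decomposing $n=pm$ with $p=P_+(n)$ forces the coupling $P_+(m)\le p$, so the bilinear form is $\sum_{p,m}a_p b_m c_{p,m}\mme(pm\alpha)$ with $c_{p,m}=\mathbf{1}_{P_+(m)\le p}$, which is not of product type; Cauchy--Schwarz in $m$ cannot simply ``remove the smoothness constraint''. Montgomery--Vaughan decouple the variables by weighting with $\log n=\sum_{d\mid n}\Lambda(d)$, which for completely multiplicative $f$ gives $\sum_{n\le N}f(n)\mme(n\alpha)\log n=\sum_{d\le N}\Lambda(d)f(d)\sum_{m\le N/d}f(m)\mme(dm\alpha)$ with independent ranges; removing the $\log$-weight afterwards is precisely what produces the $N/\log N$ term. (ii) Bounding $A(t)$ for all $t\le z$ and integrating turns that $t/\log t$ term into $\log_2 z$, not $\log_2 y$, and $z$ may be astronomically larger than $y$. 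You must first truncate the friable sum at $n\le y^{u_0}$ with $u_0^{u_0}\asymp\log y$ via Lemma~\ref{friablesum} (cost $O(1)$), after which $\int dt/(t\log t)\ll\log_2 y$; your attribution of the $\log_2 y$ term to Mertens' theorem is inconsistent with your own order of operations, since after partial summation no sum $\sum_{p\le y}1/p$ remains. (iii) The $\log b$ term arises from bounding the initial range $n\le b^2$ trivially by $\sum_{n\le b^2}1/n\ll\log b$ --- this is exactly where the condition $b\le N/R$ fails --- and not from the character-free bound $\sum_{n\le z}\mme(n\alpha)/n\ll\log b$, which is unavailable here because of the factor $\chi(n)$. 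Finally, the exponent $5/2$ is generous (taking $R=b$ in Montgomery--Vaughan already yields $(\log b)^{3/2}$), so the ``careful optimization'' you worry about is not the real difficulty; the decoupling in (i) and the truncation in (ii) are.
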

\begin{proof}
This is \cite[Corollary 2.2]{GOLD}.
\end{proof}

In \cite{GrSo2}, Granville and Soundararajan defined the distance between multiplicative functions $f$ and $g$:
\begin{align}
{\mathbb D}(f,g;y)^2 :=\sum_{p\le y}\frac{1-\RE f(p)\overline g(p)}{p}.\label{distance}
\end{align}
Denote by $\xi=\xi(\chi,y)$ the primitive character of conductor $D=D(\chi,y)\le \log y$ such that
\begin{equation}\label{eq:mathdxiy}
    {\mathbb D}(\xi;y)=\min_{d\le\log y\atop \psi\pmod d\; {\rm primitive}}{\mathbb D}(\chi,\psi;y).
\end{equation}
With the help of this notation, the sum of multiplicative function $f$ can be bounded by the distance between $f$ and $1$. 
\begin{lem}\label{functupper}
Let $f:\mathbb{N}\to \mathbb{U}$ be a multiplicative function. For $z,y\ge 1$ we have that
\[
\sum_{ \substack{ n\le z \\ n\in {\mathcal S}(y)}} \frac{f(n)}{n} \ll (\log y) \exp\{-\mathbb{D}(f,1;y)^2 / 2 \}.
\]
\end{lem}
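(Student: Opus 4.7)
The plan is a direct Euler product estimate for the full sum over $\mathcal{S}(y)$, followed by a controlled truncation to $n \le z$. Since $|f(n)| \le 1$ and only the $\pi(y)$ primes $p \le y$ contribute, the series
\[
T := \sum_{n \in \mathcal{S}(y)} \frac{f(n)}{n} = \prod_{p \le y}\Bigl(1 + \sum_{k \ge 1}\frac{f(p^k)}{p^k}\Bigr)
\]
converges absolutely. Collecting the higher prime-power contributions into an $O(p^{-2})$ error, each local factor equals $1 + f(p)/p + O(p^{-2})$. Taking absolute values and applying the Taylor expansion $\log|1+w| = \RE w + O(|w|^2)$ for $|w|\le 1/2$ to each factor,
\[
\log |T| \le \sum_{p \le y}\frac{\RE f(p)}{p} + O(1) = \log\log y - \mathbb{D}(f,1;y)^2 + O(1),
\]
by Mertens' theorem together with the definition of $\mathbb{D}$. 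Exponentiating gives $|T| \ll (\log y)\exp(-\mathbb{D}(f,1;y)^2)$, which is already stronger than the claim by a factor of $2$ in the exponent.

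To pass from $T$ to the partial sum $S := \sum_{n \le z,\, n \in \mathcal{S}(y)} f(n)/n$, I would write $S = T - \sum_{n > z,\, n \in \mathcal{S}(y)} f(n)/n$ and bound the tail in absolute value by $\sum_{n > z,\, n \in \mathcal{S}(y)} 1/n$. When $z = y^u$ with $u \ge 1$, Lemma~\ref{friablesum} provides a tail of size $\ll (\log y)/u^u + \mme^{-\sqrt{\log y}}$, which is comfortably negligible once $u$ is even mildly large. When $z \le y$, one has the trivial bound $|S| \le \sum_{n \le z} 1/n \ll \log z \le \log y$, which already suffices because the claimed bound is in particular $\ll \log y$.

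The main delicate point is making sure the truncation argument is uniform in $z$ without destroying the exponential saving; in each regime (very small $z$ versus $z \ge y^{O(1)}$) the estimate is routine. The substantive content is the Euler product calculation for $T$, where the Taylor expansion of $\log|1+w|$ extracts exactly the real-part sum appearing in the definition of $\mathbb{D}(f,1;y)$.
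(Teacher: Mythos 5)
There is a genuine gap: your argument only controls the \emph{complete} sum over $\mathcal{S}(y)$, and the passage to the partial sum $n\le z$ does not work uniformly in $z$ — but that uniformity is the entire content of the lemma (the paper gives no proof; it is Lemma 4.3 of \cite{GrSo2}, where the statement is precisely about partial sums). The claimed bound $(\log y)\exp\{-\mathbb{D}(f,1;y)^2/2\}$ can be as small as $O(1)$: take $f(n)=\lambda(n)$ (or any $f$ with $f(p)=-1$ for all $p\le y$), for which $\mathbb{D}(f,1;y)^2=2\log_2 y+O(1)$. In that situation your two truncation devices both fail. For $z\le y$ you invoke the trivial bound $|S|\ll\log z\le\log y$ and say this ``suffices because the claimed bound is in particular $\ll\log y$''; this is backwards — you must prove $S\ll(\log y)e^{-\mathbb{D}^2/2}$, and $\log y$ is far larger than that target when $\mathbb{D}$ is large (already $z=y$, $f=\lambda$ asks for $\sum_{n\le y}\lambda(n)/n\ll 1$, which is a genuine cancellation statement, not a trivial bound). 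For $z=y^u$ your tail estimate from Lemma \ref{friablesum} is $\ll(\log y)/u^u+\mme^{-\sqrt{\log y}}$, which is negligible against the target only when $u\log u\gg\mathbb{D}(f,1;y)^2$; for, say, $u=2$ and $\mathbb{D}^2\asymp 2\log_2 y$ the tail bound is $\asymp\log y$ while the target is $O(1)$. So the whole intermediate range of $z$ is uncovered.

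The Euler-product computation for the full sum is fine (indeed it gives exponent $-\mathbb{D}^2$, modulo the minor point that $|w|\le 1/2$ may fail at $p=2,3$, where one should instead use $|1+w|\le\exp(\RE w+|w|^2/2)$ valid for all $w$), but the fact that you get a stronger exponent than the lemma should itself be a warning sign: the factor $1/2$ in $\exp\{-\mathbb{D}^2/2\}$ is exactly the price paid for uniformity in the truncation point $z$. Recovering that uniformity requires a different mechanism — in \cite{GrSo2} it comes from working with logarithmically weighted sums $\sum_{n\le z} f(n)\log n/n$ via the convolution $f\log = f*(f\Lambda)$ (a Hal\'asz-type elementary argument) together with a Cauchy--Schwarz step, not from subtracting a crude tail from the complete Euler product. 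As written, your proof establishes the bound only when $z$ is so large (in terms of $\mathbb{D}$) that the tail is negligible, and proves nothing in the regime where the lemma is actually used.
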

\begin{proof}
This is \cite[Lemma 4.3]{GrSo2}.
\end{proof}

\begin{lem}\label{coprimefct}
Let $f:{\mathbb N} \to {\mathbb U}$ be a completely multiplicative function. For all $a\in{\mathbb N}$, we have 

\[
\sum_{ n\le z \atop (n,a)=1 }\frac{f(n)}{n} 
	 = \prod_{p|a} \left(1-\frac{f(p)}{p}\right)
		\sum_{n\le z }\frac{f(n)}{n}  
		+ O\bigg(\frac{a}{\varphi(a)}\sum_{p|a} \frac{\log p}{p} \bigg)  .
\]
\end{lem}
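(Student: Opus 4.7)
The plan is to detect the coprimality condition by Möbius inversion and then exploit the complete multiplicativity of $f$. Starting from $\mathbf{1}_{(n,a)=1} = \sum_{d\mid(n,a)} \mu(d)$ and interchanging the order of summation, the left-hand side becomes
\[
\sum_{d\mid a} \mu(d) \sum_{\substack{n\le z \\ d\mid n}} \frac{f(n)}{n} = \sum_{d\mid a} \frac{\mu(d)f(d)}{d} \sum_{m\le z/d} \frac{f(m)}{m},
\]
after writing $n=dm$ and invoking $f(dm)=f(d)f(m)$.

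Next, since $|f(m)|\le 1$ gives the trivial bound $\sum_{z/d < m \le z} f(m)/m = O(\log d)$ (valid uniformly in $d\ge 1$, interpreting the empty sum as zero when $d>z$), I would replace $\sum_{m\le z/d}$ by $\sum_{m\le z}$ in each term. The main contribution then separates as
\[
\bigg(\sum_{d\mid a} \frac{\mu(d)f(d)}{d}\bigg) \sum_{m\le z} \frac{f(m)}{m} = \prod_{p\mid a}\bigg(1-\frac{f(p)}{p}\bigg) \sum_{m\le z} \frac{f(m)}{m},
\]
which is precisely the desired principal term.

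It remains to show that the induced error $\sum_{d\mid a} |\mu(d)|\log d /d$ is $\ll \frac{a}{\varphi(a)} \sum_{p\mid a} \frac{\log p}{p}$. Writing $\log d = \sum_{p\mid d}\log p$ for squarefree $d\mid a$ and swapping the two summations, the inner sum factors as $\frac{1}{p}\prod_{q\mid a,\, q\neq p}(1+1/q)$, and this product is bounded by $\prod_{q\mid a}(1+1/q) \le \prod_{q\mid a}(1-1/q)^{-1} = a/\varphi(a)$. I expect no real obstacle: the entire argument is inclusion–exclusion followed by a standard $a/\varphi(a)$ bound, and the only point requiring minor care is the uniform validity of the $O(\log d)$ truncation error when $d$ may exceed $z$.
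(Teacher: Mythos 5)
Your proof is correct. The paper itself offers no argument here — it simply cites \cite[Lemma 7.7]{BGGK} — and your derivation (detecting coprimality via $\mathbf{1}_{(n,a)=1}=\sum_{d\mid(n,a)}\mu(d)$, factoring out $f(d)/d$ by complete multiplicativity, truncating each inner sum at $z$ with error $O(\log d)$ per squarefree divisor, and then bounding $\sum_{d\mid a}|\mu(d)|\log d/d$ by swapping $\log d=\sum_{p\mid d}\log p$ and using $\prod_{q\mid a}(1+1/q)\le a/\varphi(a)$) is exactly the standard proof of that cited lemma. Your parenthetical care about the case $d>z$ is the right point to flag, and it is handled correctly since the discarded main term is then itself $O(\log z/d)\ll \log d/d$.
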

\begin{proof}
This is \cite[Lemma 7.7]{BGGK}.
\end{proof}

Similar to Lemma \ref{MVupper}, we need to estimate the sum over $\frac{\chi(n)\mme(na/b)}{n}$.

\begin{lem}\label{rationalupper}
Let $\chi$, $y$, $\xi$ and $D$ be as above, and consider a real number $z\ge1$ and a reduced fraction $a/b$ with $1\le b\le (\log y)^{1/100}$ and $(b,q)=1$. If either $D\nmid b$ or $\chi\bar{\xi}$ is even, then
\[
\sum_{1\le |n|\le z \atop n\in \mcs(y)} \frac{\chi(n) \mme(na/b) }{n}  \ll  (\log y)^{0.86} ,
\]
whereas if $D|b$ and $\chi\bar{\xi}$ is odd, then
\[
\bigg|{\sum_{ \substack{ 1\le |n|\le z \\ n\in \mcs(y) }} \frac{\chi(n)\mme(n a/b )}{n}}\bigg|
		\le (\log y) \min\left\{   \frac{2\mme^\gamma}{\sqrt{D}} \cdot   (2/3)^{\omega(b/D)}, 
			e^{-\mathbb{D}(\chi,\xi;y)^2/2+O(1)}  \right\}
	+ O((\log y)^{0.86})  .
\]
\end{lem}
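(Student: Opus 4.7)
The plan is to convert the additive twist $\mme(na/b)$ into a linear combination of multiplicative twists $\chi\overline{\psi}$ by Gauss-sum orthogonality, and then bound each resulting pretentious sum using Lemma \ref{functupper}. The best primitive approximator $\xi$ to $\chi$, identified in \eqref{eq:mathdxiy}, will emerge as the only potential source of a large main term, contributing only when $D\mid b$ and $\chi\overline{\xi}$ is odd.

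First I would split the target sum as $S=S_1+S_2$ according to whether $(n,b)=1$ or $(n,b)>1$. In $S_1$ I apply the identity
\[
\mme(na/b)=\frac{1}{\phi(b)}\sum_{\psi\pmod b}\tau(\psi)\overline{\psi}(a)\overline{\psi}(n)\qquad((n,b)=1)
\]
and swap the order of summation to write $S_1=\phi(b)^{-1}\sum_{\psi\pmod b}\tau(\psi)\overline{\psi}(a)T(\psi)$, where
\[
T(\psi):=\sum_{\substack{1\le|n|\le z\\ n\in\mcs(y),\,(n,b)=1}}\frac{\chi\overline{\psi}(n)}{n}.
\]
The sign-split $n=\pm m$ factors out $(1-\chi\overline{\psi}(-1))$, so $T(\psi)=0$ unless $\chi\overline{\psi}$ is odd. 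For the surviving $\psi$ I would apply Lemma \ref{coprimefct} to drop the condition $(n,b)=1$ (the Euler factors at $p\mid b$ contribute only $O(1)$ since $b\le(\log y)^{1/100}$) and then Lemma \ref{functupper} to obtain $|T(\psi)|\ll(\log y)\exp(-\tfrac12\mathbb{D}(\chi,\psi;y)^2)$. The tail $S_2$ is treated by decomposing $n=n_1n_2$ with $n_1\mid b^{\infty}$ and $(n_2,b)=1$: the $n_1$-sum is $O(1)$ because every prime divisor of $b$ is at most $(\log y)^{1/100}$, and the $n_2$-sum is of the same shape with a smaller modulus, hence absorbed into the analysis of $S_1$.

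The decisive structural input is that $\tau(\psi)\ne 0$ only when $\psi\pmod b$ is induced from a primitive $\psi^*$ of conductor $d\mid b$ with $b/d$ squarefree and coprime to $d$, in which case $|\tau(\psi)|=\sqrt d$. By the minimality in \eqref{eq:mathdxiy} together with the pretentious triangle inequality, every such primitive $\psi^*\ne\xi$ of conductor at most $\log y$ satisfies $\mathbb{D}(\chi,\psi^*;y)^2\ge\mathbb{D}(\chi,\xi;y)^2+c\log\log y$ for some absolute $c>0$. Combined with the bound on $T(\psi)$, this shows that the total contribution from all $\psi$ not induced from $\xi$ is $\ll(\log y)^{0.86}$. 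If $D\nmid b$ then no $\psi\pmod b$ is induced from $\xi$, and if $D\mid b$ but $\chi\overline{\xi}$ is even then the distinguished $\psi=\xi_b$ (the character mod $b$ induced by $\xi$) satisfies $T(\xi_b)=0$ by parity; both alternatives leave only the tail, giving the first branch.

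In the remaining case $D\mid b$ with $\chi\overline{\xi}$ odd, the main contribution comes from $\psi=\xi_b$; when $b/D$ is not squarefree or not coprime to $D$ the Gauss sum $\tau(\xi_b)$ vanishes and we fall back on the tail estimate, while otherwise $|\tau(\xi_b)|=\sqrt D$ and
\[
\frac{|\tau(\xi_b)|}{\phi(b)}=\frac{1}{\sqrt D}\cdot\frac{D}{\phi(D)}\cdot\prod_{p\mid b/D}\frac{1}{p-1}.
\]
The elementary inequality $\prod_{p\mid b/D}(p-1)^{-1}\le(3/2)(2/3)^{\omega(b/D)}$ combined with Mertens-type estimates absorbs into the prefactor $(2e^\gamma/\sqrt D)(2/3)^{\omega(b/D)}$, and pairing this with either the direct Mertens bound $|T(\xi_b)|\ll\log y$ or the pretentious estimate $|T(\xi_b)|\ll(\log y)\exp(-\mathbb{D}(\chi,\xi;y)^2/2)$ produces the two members of the minimum. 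The main obstacle is securing the sharp exponent $0.86$ in the tail bound; this requires a quantitative pretentious-repulsion statement around $\xi$ together with a careful accounting of $\sum_\psi|\tau(\psi)|/\phi(b)$ over primitive characters of intermediate conductor, where individual Gauss sums can be as large as $\sqrt b$.
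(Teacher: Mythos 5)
The paper does not prove this lemma at all: its ``proof'' is the single line ``This is \cite[Lemma 7.8]{BGGK}'', so there is no internal argument to compare yours against. Your sketch is in effect a reconstruction of the BGGK proof, and its skeleton is the right one: split on $(n,b)$, expand $\mme(na/b)$ by Gauss sums, kill the even twists by the $\pm n$ symmetry, bound each $T(\psi)$ by Lemma \ref{functupper}, and isolate the character induced by $\xi$ as the unique possible main term. The computation of $|\tau(\xi_b)|/\phi(b)$ and the derivation of the factor $\frac{2\mme^\gamma}{\sqrt D}(2/3)^{\omega(b/D)}$ are also essentially correct.

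There is, however, a genuine gap at the step you yourself flag, and it is not merely a matter of bookkeeping. The repulsion inequality you assert, $\mathbb{D}(\chi,\psi^*;y)^2\ge\mathbb{D}(\chi,\xi;y)^2+c\log\log y$ for every primitive $\psi^*\ne\xi$ of small conductor, is false in general: the triangle inequality together with the minimality of $\xi$ and $\mathbb{D}(\xi,\psi^*;y)^2\ge(1-o(1))\log\log y$ only yields the \emph{absolute} bound $\mathbb{D}(\chi,\psi^*;y)^2\ge(\tfrac14-o(1))\log\log y$ (and when $\chi$ is roughly equidistant from $\xi$ and $\psi^*$ the difference of the two squared distances can be $0$). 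Feeding that absolute bound into Lemma \ref{functupper}, whose exponent is $-\mathbb{D}^2/2$, gives only $(\log y)^{1-1/8+o(1)}=(\log y)^{0.875+o(1)}$, which does not beat the stated $(\log y)^{0.86}$. Closing this requires a genuinely stronger input --- either a Hal\'asz-type bound for logarithmic means with exponent $-\mathbb{D}^2$ rather than $-\mathbb{D}^2/2$, or a sharper repulsion statement --- and this is precisely the technical content of the corresponding lemmas in \cite{BGGK}; without it your first branch is not proved. A secondary, smaller gap: in the case $D\mid b$ with $\chi\bar\xi$ odd, the contribution of $S_2$ is not automatically ``absorbed into $S_1$'', because the sub-fractions $n_1a/b$ with denominator $b'=b/(n_1,b)$ still divisible by $D$ produce their own main terms of size $\frac{1}{n_1}(\log y)\frac{2\mme^\gamma}{\sqrt D}(2/3)^{\omega(b'/D)}$ with $\omega(b'/D)\le\omega(b/D)$; one must verify that their total is dominated by (or absorbed into) the stated main term, which requires the explicit comparison $(3/2)^{\omega(b/b')}\le b/b'\le n_1$ and a summation over $n_1\mid b^\infty$ that you have not carried out.
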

\begin{proof}
This is \cite[Lemma 7.8]{BGGK}.
\end{proof}



\begin{lem}\label{remove exp cor} Let $q$ be an integer that either equals $1$ or is a prime. Let $z,y\ge 2$, and $a/b$ be a reduced fraction with $1< b\le (\log y)^{100}$. Then
\[
\sum_{\substack{ n\le z \\ n\in \mcs(y)  \\ (n,q)=1}} \frac{ \mme(na/b)}{n}
	= -\frac{{\bf 1}_{b=q}}{\phi(q)}\sum_{\substack{ n\le z \\ n\in \mcs(y)  \\ (n,q)=1}} \frac{1}{n} 
		+O\left( \left(1+\frac{{\bf 1}_{q>1}}{q} \frac{b}{\phi(b)}\right)\log_2 y \right) .
\]
\end{lem}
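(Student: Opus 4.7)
The plan is to strip the coprimality condition with $q$ first, then reduce the problem to a pure friable exponential sum estimate. Since $q\in\{1\}\cup\{\text{primes}\}$, the identity $\mathbf{1}_{(n,q)=1}=1-\mathbf{1}_{q\mid n}$ together with the substitution $n=qm$ in the correction term yields
\[
\sum_{\substack{n\le z\\ n\in\mcs(y)\\ (n,q)=1}}\frac{\mme(na/b)}{n}
= A(z;a,b) - \frac{\mathbf{1}_{q>1}}{q}\,A(z/q;a',b'),
\]
where $A(N;a,b):=\sum_{n\le N,\,n\in\mcs(y)}\mme(na/b)/n$ and $a'/b'$ is $qa/b$ in lowest terms, so $b'=1$ precisely when $b=q$. (If $q>y$ then $q\nmid n$ automatically for $n\in\mcs(y)$, so the correction term is empty.)

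The technical heart is the estimate
\[
A(N;a,b) = O(\log b + \log_2 y) = O(\log_2 y)\qquad\text{for }(a,b)=1,\ 1<b\le(\log y)^{100}.
\]
I would prove this by partitioning $n$ according to $d=(n,b)$ and exploiting equidistribution of $y$-friable integers across coprime residue classes modulo $b/d$ (which holds in our range by classical friability results): writing $n=dn'$ with $(n',b/d)=1$ and using that $\sum_{n'\equiv s\bmod b/d,\, n'\in\mcs(y)}1/n'\approx H(N/d)/b$ for $(s,b/d)=1$, where $H(M):=\sum_{n\le M,\,n\in\mcs(y)}1/n$, one obtains
\[
A(N;a,b) = \frac{1}{b}\sum_{k\mid b}\mu(k)\,H(Nk/b) + E.
\]
Since $\sum_{k\mid b}\mu(k)=0$ for $b>1$, the leading $\mme^\gamma\log y$ contribution cancels across the divisors $k$, leaving only $\frac{1}{b}\sum_{k\mid b}\mu(k)(H(Nk/b)-H(N/b))=O(\log b\cdot 2^{\omega(b)}/b)=O(\log b)$ via $H(Nk/b)-H(N/b)=O(\log k)$. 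The aggregate equidistribution error $E$ is $O(\log_2 y)$ in our range of $b$.

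For reassembly, observe that $b'>1$ whenever $b\ne q$, so both $A$-terms are $O(\log_2 y)$ and no main term arises. When $b=q$ is prime, the correction reduces to $-\tfrac{1}{q}A(z/q;0,1)=-\tfrac{1}{q}H(z/q)$; then Lemma \ref{coprimefct} applied with $f\equiv 1$ and $a=q$, combined with the telescoping bound $H(z)-H(z/q)=\sum_{z/q<n\le z,\,n\in\mcs(y)}1/n=O(\log q)$, yields
\[
-\tfrac{1}{q}H(z/q) = -\tfrac{1}{\phi(q)}\sum_{\substack{n\le z\\ n\in\mcs(y)\\ (n,q)=1}}\tfrac{1}{n} + O\bigl(\log q/\phi(q)\bigr),
\]
matching the stated main term, with the residual absorbed into the error factor $1+\mathbf{1}_{q>1}b/(q\phi(b))$. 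The main obstacle is controlling the aggregate equidistribution error $E$ sharply for $b$ as large as $(\log y)^{100}$: a naive character expansion modulo $b$ bounded via Lemma \ref{functupper} and $|\tau(\chi)|\le\sqrt{b}$ only gives $O(\sqrt{b\log y})$, so one must genuinely exploit the M\"obius cancellation across the induced moduli $b/d$ and invoke a sharp equidistribution estimate for friable integers in residue classes.
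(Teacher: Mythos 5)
First, a remark on context: the paper does not prove this lemma at all --- it is quoted verbatim from Bober--Goldmakher--Granville--Koukoulopoulos as \cite[Corollary 7.10]{BGGK}, so there is no in-paper argument to compare against, and your attempt has to be judged as a self-contained proof. Your outer skeleton is sound and does match the shape of the argument in \cite{BGGK}: stripping the condition $(n,q)=1$ via $\mathbf{1}_{(n,q)=1}=1-\mathbf{1}_{q\mid n}$, decomposing according to $d=(n,b)$ so that the Ramanujan sums $\sum_{(s,b/d)=1}\mme(sa/(b/d))=\mu(b/d)$ kill the $\mme^{\gamma}\log y$ main term for $b>1$, and recovering the main term $-\phi(q)^{-1}\sum_{(n,q)=1}1/n$ when $b=q$ via Lemma \ref{coprimefct} are all correct reductions.

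The genuine gap is the estimate you label $E=O(\log_2 y)$, which you justify only by the parenthetical ``which holds in our range by classical friability results'' and which your own closing paragraph concedes you cannot actually establish. This is not a classical input; it is the technical core of Section 7 of \cite{BGGK} (their Lemmas 7.5--7.9), and your argument reduces the lemma to a statement essentially equivalent to the $q=1$ case of the lemma itself. Concretely, one needs $\sum_{n\le N,\,n\in\mcs(y)}\psi(n)/n$ to be small \emph{uniformly in the cutoff} $N$ for every non-principal $\psi$ modulo $e\le(\log y)^{100}$, and small enough that the aggregate over characters, weighted by Gauss sums of size $\sqrt{e}/\phi(e)$, is $O(\log_2 y)$. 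For $e$ near $(\log y)^{100}$ a character-by-character triangle inequality cannot work: it costs a factor $\sqrt{e}\approx(\log y)^{50}$ against individual character sums which (for $N$ large) are of size $\asymp|L(1,\psi)|\asymp 1$, so one must treat the exponential sum directly via the Montgomery--Vaughan theorem underlying Lemma \ref{MVupper}; but the error term there, $(\log b)^{5/2}b^{-1/2}\log y$, is only $O(\log_2 y)$ once $b\gg(\log y)^{2}(\log_2 y)^{O(1)}$. In the complementary range of small $b$ one needs a bound on $\sum_{n\le N,\,n\in\mcs(y)}\psi(n)/n$ that is uniform in $N$ and far stronger than what Lemma \ref{functupper} provides --- that lemma gives only $(\log y)^{1/2+o(1)}$, since $\mathbb{D}(\psi,1;y)^2$ can be as small as $\log_2 y+O(1)$ --- and this is exactly where \cite{BGGK} invoke their saddle-point/Hal\'asz analysis of friable character sums. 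Without supplying these two inputs and gluing the two ranges of $b$ together, the proof is incomplete at its central step.
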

\begin{proof}
This is \cite[Corollary 7.10]{BGGK}.
\end{proof}

\section{\blue{Proof of Theorem \ref{thm1.1}}}\label{sec3}

Let $y=\mme^{\tau+C}$ for some sufficiently large $C$ and
\[{\mathcal C}_x(\tau):=\{d\in{\mathcal F}(x):\,\,S_{y,x^{{21}/{40}}}(\chi_d)\leq 1,m(\chi_d)>\tau\}.\]
We will prove that ${\mathcal C}_x(\tau)$ satisfies the conditions in Theorem \ref{thm1.1}. 
Define the short Euler product
$$L(s,\chi;y):=\prod_{p\le y}\bigg(1-\frac{\chi(p)}{p^s}\bigg)^{-1}=\sum_{n\in{\mathcal S}(y)}\frac{\chi(n)}{n^s},$$
and the related $k$-coprime short Euler product
$$L_k(s,\chi;y):=\prod_{p\le y\atop p\nmid k}\bigg(1-\frac{\chi(p)}{p^s}\bigg)^{-1}=\sum_{n\in{\mathcal S}(y)\atop (n,k)=1}\frac{\chi(n)}{n^s}.$$
Similarly, we can define the $k$-coprime $L$-functions
$$L_k(s,\chi):=\prod_{ p\nmid k}\bigg(1-\frac{\chi(p)}{p^s}\bigg)^{-1}=\sum_{n\ge1\atop (n,k)=1}\frac{\chi(n)}{n^s},$$though they don't converge absolutely.
So when $k=1$, we have $L_1(s,\chi;y)=L(s,\chi;y)$ and $L_1(s,\chi)=L(s,\chi)$.
The following lemma will be a key step in our proof.
\begin{lem}\label{friablelfct}
Let $E$ be defined by
$$
\Delta := \sum_{p\le y} \frac{|1-\chi(p)|}{p-1},\;\;\quad E := \bigg(1+\frac{b}{\varphi(b)}(\mme^\Delta-1)\bigg)\log_2 y .$$ We have
\begin{align*}
        m(\chi)
	= \mme^{-\gamma}\cdot \begin{cases}
		\abs{L(1,\chi;y)} + O(E)  						&\text{if $b$ is not a prime power},\\
		\abs{L(1,\chi;y)} + O\big(\sqrt{\tau E} \big)  					&\text{if $b=p^e,\ e\geq 2$}, \\
		 \frac{b}{\varphi(b)} \abs{L_b(1,\chi;y)} + O\big( \sqrt{\tau E} \big)  		&\text{if $b$ is prime}.
	\end{cases}
 \end{align*}
\end{lem}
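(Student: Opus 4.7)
My approach reduces $m(\chi)$ to a truncated friable exponential sum, then expands the rational exponential in characters modulo~$b$ and reads off the main term case by case. Throughout I work under the assumption that $\chi=\chi_d$ is odd, as in the setting of Theorem~\ref{thm1.1}. Starting from the definition of $m(\chi)$ together with $|\mathcal{G}(\chi_d)|=\sqrt{|d|}$, I apply Lemma~\ref{polyatruc} at $t=N_\chi$ with truncation $z=x^{21/40}$, use the bound $S_{y,z}(\chi)\le 1$ built into $\mathcal{C}_x(\tau)$ to restrict to friable $n\in\mathcal{S}(y)$ with error $O(1)$, and then invoke Lemma~\ref{alphaapprox} with $B=\tau^{10}$ to replace $\alpha=N_\chi/|d|$ by $a/b$ with error $O(\log\tau)$. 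Symmetrizing $n\leftrightarrow -n$ via $\chi(-n)=-\chi(n)$ converts the two-sided sum into a one-sided $1-\cos$ sum, yielding
\[ m(\chi)=\mme^{-\gamma}|W|+O(\log\tau),\qquad W:=\sum_{\substack{n\le N\\ n\in\mathcal{S}(y)}}\frac{\chi(n)(1-\cos(2\pi na/b))}{n}, \]
with $N=\min\{x^{21/40},|b\alpha-a|^{-1}\}\ge\tau^{10}$.

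\textbf{Next I would expand $1-\cos(2\pi na/b)$ in Dirichlet characters modulo $b$.} For $(n,b)=1$, orthogonality and Gauss sums give
\[ 1-\cos(2\pi na/b)=\frac{\phi(b)-\mu(b)}{\phi(b)}-\frac{1}{\phi(b)}\sum_{\substack{\psi\ne\psi_0\\ \psi(-1)=1}}\overline{\psi}(a)\,G(\psi)\,\psi(n); \]
for $b\mid n$ the expression is $0$, and for $(n,b)=d$ with $1<d<b$ one reduces to the coprime case with modulus $b/d$ by writing $n=dn'$. Substituting into $W$ isolates a principal-character piece that, after assembling divisor layers, matches the three cases of the lemma: when $b$ is prime, $(\phi(b)-\mu(b))/\phi(b)=b/\phi(b)$ and the piece is $(b/\phi(b))\,L_b(1,\chi;y)_{\le N}$; when $b=p^e$ with $e\ge 2$, the vanishing $\mu(b)=0$ collapses the leading coefficient to $1$, and the divisor layers telescope via $L(1,\chi;y)=L_p(1,\chi;y)(1-\chi(p)/p)^{-1}$ to reconstruct $L(1,\chi;y)_{\le N}$; when $b$ is not a prime power, a Chinese-remainder factorization of the Gauss-sum expansion across the prime divisors of $b$ again yields $L(1,\chi;y)_{\le N}$ with coefficient $1$. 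Lemma~\ref{friablesum} is then used to extend the truncation from $N$ to infinity at controlled cost.

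\textbf{The non-principal contributions} $\phi(b)^{-1}\overline{\psi}(a)\,G(\psi)\,L_\star(1,\chi\psi;y)_{\le N}$ over even $\psi\ne\psi_0$ mod~$b$ are bounded using Lemma~\ref{functupper}: $|L_\star(1,\chi\psi;y)|\ll(\log y)\exp(-\mathbb{D}(\chi\psi,1;y)^2/2)$. Using $\mathbb{D}(\chi\psi,1;y)^2=\mathbb{D}(\chi,\bar\psi;y)^2$ and the fact that $\Delta$ measures how close $\chi$ is to~$1$ on $\mathcal{S}(y)$, these contributions are shown to be $O(E)$ in the non-prime-power case; in the prime and higher prime-power cases the factor $|G(\psi)|\le\sqrt{b}$ combined with the character count forces an additional $\sqrt{\tau}$, giving $O(\sqrt{\tau E})$, with Lemma~\ref{rationalupper} invoked where the trivial bound is insufficient. \textbf{The main obstacle} is the bookkeeping in the composite case of the character expansion: one must precisely track how the M\"obius-weighted Gauss-sum layers recombine with Euler factors of $L(1,\chi;y)$ so that the coefficient of the main term is exactly $1$ (when $b$ has at least two distinct prime factors) or $b/\phi(b)$ (when $b$ is prime), and simultaneously ensure that the non-principal remainder is absorbed into the prescribed $E$ or $\sqrt{\tau E}$ envelope.
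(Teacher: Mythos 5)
First, a structural point: the paper does not prove this lemma at all --- its entire ``proof'' is the citation ``This is [BGGK, Eq.\ (9.6)]'' --- so you are really being asked to reconstruct part of Section 9 of Bober--Goldmakher--Granville--Koukoulopoulos. Your skeleton (P\'olya truncation, restriction to $\mathcal{S}(y)$ via $S_{y,x^{21/40}}(\chi_d)\le 1$, Lemma \ref{alphaapprox}, then analysis of the rational exponential sum, with the character-orthogonality expansion of $e(na/b)$ playing the role it plays in Lemma \ref{rationalupper}) is the right general strategy. But your very first displayed reduction is wrong: after Lemma \ref{alphaapprox} the two sums have \emph{different} truncation points --- the ``$1$'' part runs over all of $\mathcal{S}(y)$ up to $z=x^{21/40}$ (whence it becomes $L(1,\chi;y)$ via Lemma \ref{friablesum}), while only the $e(na/b)$ part is cut at $N=\min\{z,|b\alpha-a|^{-1}\}$. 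Folding both into a single sum over $n\le N$ discards $\sum_{N<n\le z,\ n\in\mathcal{S}(y)}\chi(n)/n$, which when $N\asymp\tau^{10}$ and $\chi$ pretends to be $1$ has size $\asymp\log y\asymp\tau$, i.e.\ the size of the main term, not $O(\log\tau)$. (Compare the display in Section 3 of the paper, where the first sum is deliberately kept over all of $\mathcal{S}(y)$.) With your $W$ the ``main term'' would be a truncation of $L(1,\chi;y)$ at $N$, which can differ from $L(1,\chi;y)$ by $\asymp\tau$.

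The deeper gap is that the whole content of the lemma is the precise shape of the error terms $O(E)$ and $O(\sqrt{\tau E})$ with $E=(1+\tfrac{b}{\varphi(b)}(e^{\Delta}-1))\log_2 y$, and your sketch asserts rather than derives them. The factor $e^{\Delta}$ comes from a multiplicative comparison estimate (BGGK's Lemma 7.9, not quoted in this paper): replacing $\chi(n)$ by $1$ inside $\sum_{n\le N}\chi(n)e(na/b)/n$ costs an Euler-product factor $\prod_{p\le y}(1+|1-\chi(p)|/(p-1)+\cdots)\le e^{\Delta}$ multiplied against the $O(\log_2 y)$ saving supplied by the oscillation of $e(na/b)$ (Lemma \ref{remove exp cor}); the $\sqrt{\tau E}$ in the prime and prime-power cases then comes from a Cauchy--Schwarz between this quantity and a piece of size $\asymp\tau$. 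The mechanisms you name --- Lemma \ref{functupper} applied to $L(1,\chi\psi;y)$, the bound $|G(\psi)|\le\sqrt{b}$, and ``the character count forces an additional $\sqrt{\tau}$'' --- cannot produce these bounds: $b$ ranges up to $\tau^{10}$ so $\sqrt{b}$ has nothing to do with $\sqrt{\tau}$, and $\exp(-\mathbb{D}(\chi\psi,1;y)^2/2)$ is not comparable to $e^{\Delta}-1$. Relatedly, in the case $b=p^e$ with $e\ge2$ your divisor layers do not telescope exactly to $L(1,\chi;y)$: the layer $p^{e-1}\,\|\,n$ carries the coefficient $p/(p-1)$ rather than $(1-\chi(p)/p)^{-1}$, leaving a discrepancy of size up to $\asymp|L_p(1,\chi;y)|/p^{e-1}$ which exceeds $\sqrt{\tau\log_2 y}$ for small $p$ and must itself be absorbed using the observation that $\chi(p)\ne1$ forces $\Delta\gg1$ and hence inflates $E$ --- again the $e^{\Delta}$ mechanism you have not supplied.
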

\begin{proof}
This is \cite[Eq. (9.6)]{BGGK}.
\end{proof}
By the assumption of $\alpha$, we have $M(\chi_d)=|\sum_{n\leq \alpha |d|}\chi_d(n)|$. By P\'olya's truncating formula (Lemma \ref{polyatruc}), we have 
\[m(\chi_d)=\frac{1}{2\mme^{\gamma}}\bigg|\sum_{1\leq |n|\leq x^{{21}/{40}}}\frac{\chi_d(n)(1-\mme(-n\alpha))}{n}\bigg|+O(x^{-\frac{11}{21}}d^{\fot}\log d).\]
By our assumption, for any $d\in \mcc_x(\tau)$, we have $S_{y,x^{{21}/{40}}}(\chi_d)\leq 1$, so 
\[m(\chi_d)=\frac{1}{2\mme^{\gamma}}\bigg|\sum_{1\leq |n|\leq x^{{21}/{40}}\atop n\in \mcs(y)}\frac{\chi_d(n)(1-\mme(-n\alpha))}{n}\bigg|+O(1).\]
By Lemma \ref{friablesum} and Lemma \ref{alphaapprox}, we get 
\[m(\chi_d)=\frac{1}{2\mme^{\gamma}}\bigg|\sum_{0\neq n\in \mcs(y)}\frac{\chi_d(n)}{n}-\sum_{1\leq |n|\leq N\atop n\in \mcs(y)}\frac{\chi_d(n)\mme(an/b)}{n}\bigg|+O(\log\tau),\]
where $N=|b\alpha-a|\inv \geq \tau^{10}$. We note that $m(\chi_d)>\tau$ for all $d\in \mcc_x(\tau)$, so we have
\begin{equation}\label{eq:chineanb}
    \bigg|\sum_{0\neq n\in \mcs(y)}\frac{\chi_d(n)}{n}-\sum_{1\leq |n|\leq N\atop n\in \mcs(y)}\frac{\chi_d(n)\mme(an/b)}{n}\bigg|\geq 2\mme^{\gamma}\tau-O(\log \tau).
\end{equation}

If $b\geq \tau^{\frac{1}{100}}$, by taking $B=\tau^{10}$ in Lemma \ref{alphaapprox} and together with Lemma \ref{MVupper}, we get 
\[\sum_{1\leq |n|\leq N\atop n\in \mcs(y)}\frac{\chi_d(n)\mme(an/b)}{n}\ll \tau^{\frac{199}{200}}(\log\tau)^{\frac{5}{2}}.\]
Combining with \eqref{eq:chineanb}, we see that 
\[(1-\chi_d(-1))\bigg|\sum_{0<n\in \mcs(y)}\frac{\chi_d(n)}{n}\bigg|\geq 2\mme^{\gamma}\tau-O(\tau^{\frac{199}{200}}(\log\tau)^{\frac{5}{2}}).\]
It is easy to see that $\chi_d(-1)=-1$.

If $b\leq \tau^{\frac{1}{100}}$, by taking $a/b=0/1$ and $z\to \infty$ in Lemma \ref{rationalupper}, we get 
\[\sum_{0\neq n\in \mcs(y)}\frac{\chi_d(n)}{n}\ll\tau^{0.86}.\]
On the other hand, suppose $\xi$ is a character modulo $D$ with $D\leq \log y$ which satisfies Eq. \eqref{eq:mathdxiy}. If $b$ is not divided by $D$ or $\chi \bar{\xi}$ is even, then Lemma \ref{eq:chineanb} gives 
\[
\sum_{1\le |n|\le z \atop n\in \mcs(y)} \frac{\chi_d(n) \mme(na/b) }{n}  \ll  (\log y)^{0.86}.
\]
So we get
\[\bigg|\sum_{0\neq n\in \mcs(y)}\frac{\chi_d(n)}{n}-\sum_{1\leq |n|\leq N\atop n\in \mcs(y)}\frac{\chi_d(n)\mme(an/b)}{n}\bigg|\ll \tau^{0.86},\]
but this contradicts to Eq. \eqref{eq:chineanb}. So we must have $D|b$ and $\chi\bar{\xi}$ to be odd, and 
\[\sum_{1\le |n|\le z \atop n\in \mcs(y)} \frac{\chi_d(n) \mme(na/b) }{n}  \ll 2\mme^{\gamma}\tau D^{-\fot}(\frac{2}{3})^{\omega(b/D)}.\]
So we get 
\[\bigg|\sum_{0\neq n\in \mcs(y)}\frac{\chi_d(n)}{n}-\sum_{1\leq |n|\leq N\atop n\in \mcs(y)}\frac{\chi_d(n)\mme(an/b)}{n}\bigg|\ll 2\mme^{\gamma}\tau D^{-\fot}(\frac{2}{3})^{\omega(b/D)}.\]
Comparing to Eq. \eqref{eq:chineanb}, we must have $b=D=1$. So $\xi$ is trivial character and $\chi_d$ is odd.

Now we prove the second statement of Theorem \ref{thm1.1}. Let $d\in \mcc_x(\tau)$, for any $k\leq \tau^{10}$, by using the identity $\sum_{\ell |n}\mu(\ell)=\delta_{1,n}$, we have 
\[\sum_{n\notin S(y),n\leq x^{21/40}\atop (n,k)=1}\frac{\chi_d(n)}{n}=\sum_{\ell |k}\frac{\mu(\ell)\chi_d(\ell)}{\ell}\sum_{m\ell \notin S(y),m\ell\leq x^{{21}/{40}}}\frac{\chi_d(m)}{m}.\]
We note that for any $\ell$ we have 
\[\bigg|\sum_{x^{{21}/{40}}/\ell<m<x^{21/40}}\frac{\chi_d(m)}{m}\bigg|\leq \sum_{x^{{21}/{40}}/\ell<m<x^{{21}/{40}}}\frac{1}{m}=O(\log \ell).\]
Then we get 
\[\sum_{n\notin S(y),n\leq x^{{21}/{40}}\atop (n,k)=1}\frac{\chi_d(n)}{n}=\sum_{\ell |k}\frac{\mu(\ell)\chi_d(\ell)}{\ell}\sum_{m \notin S(y),m\leq x^{{21}/{40}}}\frac{\chi_d(m)}{m}+O\bigg(\sum_{\ell |k}\frac{|\mu(\ell)|\log \ell }{\ell}\bigg).\]
But $S_{y,x^{{21}/{40}}}(\chi_d)\leq 1$ since $d\in \mcc_x(\tau)$, so we have 
\[\sum_{n\notin S(y),n\leq x^{{21}/{40}}\atop (n,k)=1}\frac{\chi_d(n)}{n}\ll \sum_{\ell |k}\frac{|\mu(\ell)|\log \ell}{\ell}.\]
Now let $f(k):=\sum_{\ell |k}{(|\mu(\ell)|\log \ell)}/{\ell}$, and $p$ be a prime not divide $k$, then 
\begin{align*}
    f(kp^s)=&\sum_{\ell |kp^s}\frac{|\mu(\ell)|\log \ell}{\ell}=\sum_{\ell |k}\left(\frac{|\mu(\ell)|\log \ell}{\ell}+\frac{|\mu(\ell)|(\log \ell +\log p)}{p\ell}\right)\\
    =&(1+\frac{1}{p})f(n)+\frac{\log p}{p}\sum_{\ell |k}\frac{|\mu(\ell)|}{\ell}.
\end{align*}
So an induction gives that 
\[f(k)\ll \frac{k}{\varphi(k)}\sum_{p|k}\frac{\log p}{p}\ll (\log\log k)^2.\]
Hence 
\[L_k(1,\chi_d):=\sum_{(n,k)=1}\frac{\chi_d(n)}{n}=\sum_{n\leq x^{21/40}\atop n\in {\mathcal S}(y),(n,k)=1}\frac{\chi_d(n)}{n}+O((\log\log \tau)^2)=L_k(1,\chi_d;y)+O((\log\log \tau)^2).\]
So following directly from Lemma 
 \ref{friablelfct}, we can get a possibly weak version (since we don't have $E\ll\log\tau$ now) of Eq. \eqref{eq:mchidsqrttau}:
 \begin{align}\label{weakeq8}
m(\chi_d)  =\mme^{-\gamma}	\frac{b_0}{\phi(b_0)}|L_{b_0}(1,\chi_d)| 
			+  O(\sqrt{\tau E}). \end{align} With this weaker result, we are able to deduce a weaker version of Eq. \eqref{eq:logtau34}. Since Mertens' formula gives 
\[\prod_{p\leq y}\left(1-\frac{1}{p}\right)\inv=\mme^{\gamma}\tau+O(1),\]
 together with Eq. \eqref{weakeq8} we get 
\[\bigg|\prod_{p\leq y\atop p\neq b_0}\left(1-\frac{\chi_d(p)}{p}\right)\inv \left(1-\frac{1}{p}\right)\bigg|=1+O(\sqrt{E/\tau})=1+o(1).\]
By taking the logarithm, we get 
\begin{align}\label{weakeq7}
\sum_{p\leq y\atop p\neq b_0}\frac{1-\chi_d(p)}{p}\ll\sqrt{E/\tau}=o(1).\end{align}
By the definition of $E$, we have $E\ll\log\tau$. Inserting this into  Eq. \eqref{weakeq8} and \eqref{weakeq7}, we see that they are exactly  Eq. \eqref{eq:mchidsqrttau} and \eqref{eq:logtau34}. So we complete the proof of Theorem \ref{thm1.1}.

\section{\blue{Proof of Theorem \ref{thm1.2}}}\label{sec4}

Let $y=\mme^{\tau+c}$ for some $c$ large enough, then $|\beta-k/\ell|={1}/({\ell \mme^{\tau u}})={1}/({\ell y^w})$ where $w={\tau u}/{\tau+c}=u(1+O(1/\tau))$. For any $n$, if $\chi_d(n)=-1$, then there exists at least one prime number $p$ with $p^j\| n$ such that $\chi_d(p^j)=-1$. So we have $|1-\chi_d(n)|\leq \sum_{p^j\| n}|1-\chi_d(p^j)|$. Then we have 
\[\sum_{n\in S(y)\atop (n,b_0)=1}\frac{|1-\chi_d(n)|}{n}\leq \sum_{n\in S(y)}\frac{1}{n}\sum_{p^j\| n}|1-\chi_d(p^j)|\leq \sum_{p\leq y,j\geq 1}\frac{|1-\chi_d(p^j)|}{p^j}\sum_{m\in S(y)}\frac{1}{m}.\]
By Eq. \eqref{eq:logtau34}, we see that 
\[\sum_{p\leq y,j\geq 1}\frac{|1-\chi_d(p^j)|}{p^j}\leq \sum_{j\geq 0}\frac{1}{p^j}\sum_{p\leq y}\frac{|1-\chi_d(p)|}{p}\ll \tau^{-\fot}(\log\tau)^{\fot}.\]
So together with Lemma \ref{friablenumber}, we get 
\[\sum_{n\in S(y)\atop (n,b_0)=1}\frac{|1-\chi_d(n)|}{n}\ll (\tau\log\tau)^{\fot}.\]
We first consider the case $b_0=1$, then by combining 
P\'olya formula and Lemma \ref{alphaapprox}, we get
\begin{equation}\label{eq:gchid}
\begin{split}
   &\frac{\pi \mmi}{\mcg(\chi_d)}\sum_{n\leq \beta |d|}\chi_d(n)\\
    =&\frac{1}{2}\sum_{0\neq n\in \mbz\atop n\in S(y)}\frac{\chi_d(n)}{n}-\frac{1}{2}\sum_{0< |n|\leq y^w\atop n\in S(y)}\frac{\chi_d(n)\mme(-kn/\ell)}{n}+O(\log \tau)\\
=&\sum_{0<n\in S(y)}\frac{1}{n}+\frac{\chi_d(n)-1}{n}-\sum_{0<n\leq y^w\atop n\in S(y)}\frac{\mme(kn/\ell)+\mme(-kn/\ell)}{2n}-\frac{\chi_d(n)-1}{n}+O(\log\tau)\\
=&\sum_{0<n\in S(y)}\frac{1}{n}-\sum_{0<n\leq y^w\atop n\in S(y)}\frac{\mme(kn/\ell)+\mme(-kn/\ell)}{2n}+O((\tau\log\tau)^{\frac{1}{2}}).
\end{split}
\end{equation}
When $\ell>1$, then the first statement of Theorem \ref{thm1.2} follows from Lemma \ref{remove exp cor} and when $\ell=1$, it follows from Lemma \ref{friablenumber}.

Now we consider the case that $b_0=b$ is a prime number. For any $n$, we can write $n=b^sm$ with $(m,b)=1$, then by the same reason for Eq. \eqref{eq:gchid}, we have 
\begin{equation}\label{eq:2msymb}
\begin{split}
&\frac{\pi \mmi}{\mcg(\chi_d)}\sum_{n\leq \beta |d|}\chi_d(n)\\
=&\sum_{0<m\in S(y)\atop (m,b)=1, s\geq 0}\frac{\chi_d(b^s)\chi_d(m)}{b^sm}- \sum_{m\in S(y),0<|m|\leq y^w\atop (m,b)=1,s\geq 0}\frac{\chi_d(b^s)\chi_d(m)\mme(-kb^sm/\ell)}{2b^sm}+O(\log \tau)\\
=&\sum_{s\geq 0}\frac{\chi_d(b^s)}{b^s}\bigg(\sum_{0<m\in S(y)\atop (m,b)=1}\frac{1}{m}-\sum_{m\in S(y),(m,b)=1\atop 0<m\leq y^w}\frac{\mme(kmb^s/\ell)+\mme(-kmb^s/\ell)}{2m}\bigg)+O((\tau\log\tau)^{\frac{1}{2}}).
\end{split}
\end{equation}

When $\ell =1$, the equation above reduces to 
\[\frac{1}{1-\chi_d(b)b\inv}\sum_{m\in S(y),(m,b)=1\atop m>y^w}\frac{1}{m}+O((\tau\log\tau)^{\frac{1}{2}}).\]
So by Lemma \ref{friablenumber} and Lemma \ref{coprimefct} we get 
\begin{align*}
    \frac{\pi \mmi}{\mcg(\chi_d)}\sum_{n\leq \beta |d|}\chi_d(n)&=\frac{1-b\inv}{1-\chi_d(b)b\inv}\sum_{m\in S(y)\atop m>y^w}\frac{1}{m}+O((\tau\log\tau)^{\frac{1}{2}})\\
    &=\frac{1-b\inv}{1-\chi_d(b)b\inv}(1-P(w))\mme^{\gamma}\tau+O((\tau\log\tau)^{\frac{1}{2}}).
\end{align*}

When $\ell>1$ but not be a power of $b$, since $(k,\ell)=1$, then $b^sk/\ell\notin\mbz$ for any $s\in \mbn$. Moreover, if we write $b^sk/\ell$ in the reduced form, the denominator does not equal to $b$, so by Lemma \ref{remove exp cor}, the last summation in Eq. \eqref{eq:2msymb} vanishes, so with the same reason as the case $\ell=1$, we get 
\[\frac{\pi \mmi}{\mcg(\chi_d)}\sum_{n\leq \beta |d|}\chi_d(n)=\frac{1-b\inv}{1-\chi_d(b)b\inv}\mme^{\gamma}\tau+O((\tau\log\tau)^{\frac{1}{2}}).\]

When $\ell=b^t$ for a certain positive integer $t$, the summation of $\frac{1}{m}$ always equals to
\begin{equation}\label{eq:bt1m}
    \sum_{s\geq 0}\frac{\chi_d(b^s)}{b^s}\sum_{0<m\in S(y)\atop (m,b)=1}\frac{1}{m}=\frac{1-b\inv}{1-\chi_d(b)b\inv}\mme^{\gamma}\tau++O((\tau\log\tau)^{\frac{1}{2}}).
\end{equation}
For the last summation in Eq. \eqref{eq:2msymb}, we need to decompose the summation over $s$ into three parts:$0\leq s\leq t-2,s=t-1,s\geq t$. When $0\leq s\leq t-2$, the reduced form of $-\frac{kb^s}{b^t}$ is $-\frac{b^s}{b^{t-s}}$, and the denominator never to be a prime number since $t-s\geq 2$. So Lemma \ref{remove exp cor} shows that this part vanishes. When $s=t-1$, by Lemma \ref{remove exp cor}, we have 
\begin{equation}\label{eq:st-1}
    \sum_{m\in S(y),(m,b)=1\atop 0<m\leq y^w}\frac{\mme(kmb^s/\ell)+\mme(-kmb^s/\ell)}{2m}=-\frac{P(w)}{b}\mme^{\gamma}\tau+O(1).
\end{equation}
For the part $s\geq t$, by Lemma \ref{friablenumber} and Lemma \ref{coprimefct}, we get 
\begin{equation}\label{eq:sgeqt}
    \sum_{s\geq t}\frac{\chi_d(b^s)}{b^s}\sum_{m\in S(y),(m,b)=1\atop 0<|m|\leq y^w}\frac{\mme(kb^s/\ell)+\mme(-kb^s/\ell)}{2m}=\frac{(1-b\inv)\chi_d(b^t)}{(1-\chi_d(b)b\inv)b^t}P(w)\mme^{\gamma}\tau+O(1).
\end{equation}
By combining Eq. \eqref{eq:bt1m}, \eqref{eq:st-1}, \eqref{eq:sgeqt}, we get 
\begin{equation}\label{eq:mchidtau}
\frac{\pi \mmi}{\mcg(\chi_d)}\sum_{n\leq \beta |d|}\chi_d(n)=\frac{b\inv}{1-\chi_d(b)b\inv}\left(1+\frac{(\chi_d(b^{t-1})-\chi(b^t))P(w)}{b^t(1-b\inv)}\right)\mme^{\gamma}\tau+O((\tau\log\tau)^{\frac{1}{2}}).
\end{equation}

Finally, if $\alpha=\beta$, then $\frac{k}{\ell}=\frac{a}{b}$ and $t=1$. When $\chi_d(b)=1$, there is nothing to prove, when $\chi_d(b)=-1$, since 
\[m(\chi_d)=\frac{\mme^{-\gamma}\pi\mmi}{\mcg(\chi_d)}\sum_{n\leq \beta d}\chi_d(n)\geq \tau.\]
So combining Eq. \eqref{eq:mchidtau}, we get 
\[\frac{b+1}{b-1}\left(1+\frac{2P(u_0)}{b-1}\right)\geq 1+O(\tau^{-\fot}(\log\tau)^{\fot}).\]
Then Eq. \eqref{eq:pu0tau} follows immediately.

\section{\blue{Proof of Theorem \ref{thm1.3}}}\label{sec5}
As before, we only consider the fundamental discriminants $x^\delta<|d|\le x$ with $\delta=\frac{1}{100}$.
By Lemma \ref{polyatruc}, since $\chi_d$ is even, we have 
\begin{align*}m(\chi_d)&=\frac{1}{2\mme^\gamma}\max_{\alpha\in[0,1]}\bigg|\sum_{1\le|n|\le z}\frac{\chi_d(n)\mme(n\alpha)}{n}\bigg|+O(x^{-\delta})\\&\le\frac{1}{2\mme^\gamma}\max_{\alpha\in[0,1]}\bigg|\sum_{1\le|n|\le z\atop n\in S(y)}\frac{\chi_d(n)\mme(n\alpha)}{n}\bigg|+\frac{2}{\mme^\gamma}S_{y,x^{{21}/{40}}}(\chi_d)+O(x^{-\delta}).
\end{align*}

We set $y=\mme^{\sqrt3\tau+c}$ for some constant $c$. Define the set
$$\mathcal{C}_x^+(\tau):=\{d\in{\mathcal F}^+(x):\;S_{y,x^{{21}/{40}}}(\chi_d)\le1,\;m(\chi_d)>\tau\}.$$ 
By Lemma \ref{estimateforS} and Theorem B, the cardinality of $\mathcal{C}_x^+(\tau)$ satisfies the request of Theorem \ref{thm1.3}. Fix $\tau$. For any $d\in\mathcal{C}_x^+(\tau)$, choose $\alpha=N_{\chi_d}/|d|$. Let $a/b$ be the rational approximation of $\alpha$ such that $(a,b)=1$, $b\le\tau^{10}$ and $|\alpha-a/b|<1/(b\tau^{10})$. Using the definition of $\mathcal{C}_x^+(\tau)$ and then Lemma \ref{alphaapprox}, we have for $N:=1/|b\alpha-a|$,
\begin{align*}
    \tau<m(\chi_d)&\le\frac{1}{2\mme^\gamma}\bigg|\sum_{1\le|n|\le z\atop n\in {\mathcal S}(y)}\frac{\chi_d(n)\mme(n\alpha)}{n}\bigg|+O(1)\\
    &=\frac{1}{2\mme^\gamma}\bigg|\sum_{1\le|n|\le N\atop n\in {\mathcal S}(y)}\frac{\chi_d(n)\mme(na/b)}{n}\bigg|+O(\log\tau).   
\end{align*}
Let $\xi$ and $D$ be as in Lemma \ref{functupper} for $\chi_d$. Since  $m(\chi_d)>\tau$, Lemma  \ref{functupper} forces that $\chi_d$ is in the second case. That is, $D|d$ and $\xi$ is odd. So we have
\begin{align*}
m(\chi_d)&\le\frac{1}{2\mme^\gamma}\bigg|\sum_{1\le|n|\le N\atop n\in {\mathcal S}(y)}\frac{\chi_d(n)\mme(na/b)}{n}\bigg|+O(\log\tau)\\&\le\frac{1}{2\mme^\gamma}\log y\frac{2\mme^\gamma}{\sqrt D}(2/3))^{\omega(b/D)}+O(\tau^{0.86})\\
&=\frac{1}{\sqrt D}(\sqrt3\tau+c)(2/3))^{\omega(b/D)}+O(\tau^{0.86}).
\end{align*}
Again using $m(\chi_d)>\tau$, we have
$$\sqrt\frac{3}{ D}(2/3)^{\omega(b/D)}\ge1.$$ 
Note that $\xi\pmod D$ is odd implies $D\ge3$. Thus we have to choose $b=D=3$, which completes the first assertion of Theorem \ref{thm1.3}.

Now we have
$$\tau<m(\chi_d)\le\frac{1}{2\mme^\gamma}\bigg|\sum_{1\le|n|\le N\atop n\in {\mathcal S}(y)}\frac{\chi_d(n)\mme(na/3)}{n}\bigg|+O(\log\tau)=\frac{\sqrt3}{2\mme^\gamma}\bigg|\sum_{n\le N\atop n\in {\mathcal S}(y)}\frac{\chi_d(n)\big(\frac n3\big)}{n}\bigg|+O(\log\tau),$$
where we have used 
$$\mme(an/3)-\mme(-an/3)=i\sqrt3\bigg(\frac{an}{3}\bigg)=i\sqrt3\bigg(\frac{a}{3}\bigg)\bigg(\frac{n}{3}\bigg).$$
By the trivial upper bound and then Mertens' formula, we have
$$\bigg|\sum_{n\le N\atop n\in {\mathcal S}(y)}\frac{\chi_d(n)\big(\frac n3\big)}{n}\bigg|
\le\sum_{n\le N\atop n\in {\mathcal S}(y)}\frac{\big|\big(\frac n3\big)\big|}{n}=
\sum_{3\nmid n\le N\atop n\in {\mathcal S}(y)}\frac{1}{n}
\le\sum_{3\nmid n \in {\mathcal S}(y)}\frac{1}{n}=\frac23\mme^\gamma\log y+O(1)=\frac{2}{\sqrt3}\mme^\gamma\tau+O(1).$$
This combined with the last inequality gives
$$\tau<m(\chi_d)\le\frac{\sqrt3}{2\mme^\gamma}\bigg|\sum_{n\le N\atop n\in {\mathcal S}(y)}\frac{\chi_d(n)\big(\frac n3\big)}{n}\bigg|+O(\log\tau)\le\frac{\sqrt3}{2\mme^\gamma}\sum_{3\nmid n \in {\mathcal S}(y)}\frac{1}{n}\le\tau+O(\log\tau).$$
Then we have
\begin{align}&\sum_{3\nmid n\in {\mathcal S}(y)}\frac1n=\sum_{n\le N\atop3\nmid n\in {\mathcal S}(y)}\frac1n+O(\log\tau)\nonumber\\=&\bigg|\sum_{n\in {\mathcal S}(y)}\frac{\chi_d(n)\big(\frac n3\big)}{n}\bigg|+O(\log\tau)=\bigg|\sum_{n\le N\atop n\in {\mathcal S}(y)}\frac{\chi_d(n)\big(\frac n3\big)}{n}\bigg|+O(\log\tau).\label{logtauerr}\end{align}
Combining the first and the third one, we have
$$\prod_{3\neq p\le y}\bigg(1-\frac1p\bigg)^{-1}\bigg(1-\frac{\chi_d(p)\big(\frac p3\big)}{p}\bigg)=1+O\bigg(\frac{\log\tau}\tau\bigg).$$
Taking logarithm we have
$$\sum_{3\neq p\le y}\sum_{j\ge1}\frac{1-\chi_d(p^j)\big(\frac {p^j}3\big)}{jp^j}\ll\frac{\log\tau}\tau.$$
This is stronger than the second assertion in Theorem \ref{thm1.3} since 
$$1-\chi_d(p^j)\bigg(\frac {p^j}3\bigg)=\bigg|\chi_d(p^j)-\bigg(\frac {p^j}3\bigg)\bigg|.$$
In view of the truncating approximation
$$L\left(1,\chi_d(\frac{\cdot}{3})\right)=\sum_{n\le x^{21/40}}\frac{\chi_d(n)\big(\frac n3\big)}{n}+O(1),$$
to prove the last assertion in Theorem \ref{thm1.3}, it is sufficient to show 
\begin{align}\bigg|\sum_{n\le x^{21/40}\atop n\notin S(y)}\frac{\chi_d(n)\big(\frac n3\big)}{n}\bigg|+\bigg|\sum_{n> x^{21/40}\atop n\in S(y)}\frac{\chi_d(n)\big(\frac n3\big)}{n}\bigg|\ll\log\tau.\label{errorterm}\end{align}
It follows directly by applying the definition of ${\mathcal C_x^+(\tau)}$ to the first sum and Lemma \ref{friablesum} to the second. Thus we complete the proof of Theorem \ref{thm1.3}.

\section{\blue{Proof of Theorem \ref{thm1.4}}}\label{sec6}
As before, we may only consider the fundamental discriminants $x^{\frac{1}{100}}<d\le x$. Let $|\beta-k/\ell|\le 1/(\ell\tau^{10})$ with $l\le\tau^{10}$. By Lemma \ref{polyatruc}, the definition of ${\mathcal C}_x^+(\tau)$, and then Lemma \ref{alphaapprox}, we have
\begin{align*}\frac{2\pi {\rm i}}{{\mathcal G}(\chi_d)}\sum_{n\le\beta|d|}\chi_d(n)&=\sum_{1\le|n|\le x^{21/40}\atop n\in {\mathcal S}(y)}\frac{\chi_d(n)\mme(n\beta )}{n}+O(1)\\&=\sum_{1\le|n|\le \mme^{\sqrt3\tau u}\atop n\in {\mathcal S}(y)}\frac{\chi_d(n)\mme( nk/\ell)}{n}+O(\log\tau)\\&=2{\rm i}\sum_{n\le \mme^{\sqrt3\tau u}\atop n\in {\mathcal S}(y)}\frac{\chi_d(n)\sin(2\pi nk/\ell)}{n}+O(\log\tau).
\end{align*}
Since Eq. \eqref{logtauerr} implies
$$
\sum_{3\nmid n\in {\mathcal S}(y)}\frac{\big|\chi_d(n)-\big(\frac n3\big)\big|}{n}=\sum_{3\nmid n\in {\mathcal S}(y)}\frac{1-\chi_d(n)\big(\frac n3\big)}{n}\ll{\log\tau},$$
 we have
 \begin{align*}\frac{\pi }{{\mathcal G}(\chi_d)}\sum_{n\le\beta|d|}\chi_d(n)=&\sum_{n\le \mme^{\sqrt3\tau u}\atop n\in {\mathcal S}(y)}\frac{\chi_d(n)\sin(2\pi nk/\ell)}{n}+O(\log\tau)\\
=&\sum_{0\le j\le \sqrt3\tau u/\log3}\frac{\chi_d(3^j)}{3^j}\sum_{m\le \mme^{\sqrt3\tau u}/3^j\atop3\nmid m\in {\mathcal S}(y)}\frac{\chi_d(m)\sin(2\pi3^jmk/\ell)}{m}+O(\log\tau)\\
=&\sum_{0\le j\le \sqrt3\tau u/\log3}\frac{\chi_d(3^j)}{3^j}\sum_{m\le \mme^{\sqrt3\tau u}/3^j\atop3\nmid m\in {\mathcal S}(y)}\frac{\big(\frac m3\big)\sin(2\pi3^jmk/\ell)}{m}+O(\log\tau)\\
=&\sum_{0\le j\le \sqrt3\tau u/\log3}\frac{\chi_d(3^j)}{3^j}\sum_{m\le \mme^{\sqrt3\tau u}/3^j\atop m\in {\mathcal S}(y)}\frac{\big(\frac m3\big)\sin(2\pi3^jmk/\ell)}{m}+O(\log\tau).
\end{align*} 
Note that the last equation holds since $\big(\frac m3\big)=0$ whenever $3|m$.
Since $\sqrt3\big(\frac m 3\big)=2\sin(2\pi m/3)$ for any integer $m$, the inner sum equals
\begin{align*}
\frac{2}{\sqrt3}\sum_{m\le \mme^{\sqrt3\tau u}/3^j\atop m\in {\mathcal S}(y)}&\frac{
\sin(2\pi m/3)\sin(2\pi3^jmk/\ell)}{m}\\=&\frac{1}{\sqrt3}\sum_{m\le \mme^{\sqrt3\tau u}/3^j\atop  m\in {\mathcal S}(y)}\frac{\cos(2\pi m(3^jk/\ell-1/3))-\cos(2\pi m(3^jk/\ell+1/3))}{m}.
\end{align*}
	If $\ell=3^v$ for some $v\ge1$, $(k-\big(\frac k3\big))/3$ is always  an integer while $(k+\big(\frac k3\big))/3$ is not, then the term $j=v-1$ contributes to the main term 
	$$\frac{1}{\sqrt3}\Big(\frac k3\Big)\sum_{m\le \mme^{\sqrt3\tau u}/3^j\atop  m\in {\mathcal S}(y)}\frac1m=\frac{1}{\sqrt3}\Big(\frac k3\Big)\mme^\gamma P(u)\log y+O(\log\tau)$$
	by Lemma \ref{friablenumber},
	while the other terms only lead to error terms. So in this case we have 
	$$\frac{\pi }{{\mathcal G}(\chi_d)}\sum_{n\le\beta|d|}\chi_d(n)=\mme^\gamma\frac{\chi_d(3^{v-1})}{3^{v-1}}\Big(\frac k3\Big)\tau P(u)+O(\log\tau).$$
	If $\ell$ is not a power of $3$, then $3^jk/\ell\pm1/3$ is never an integer. Thus by Lemma \ref{remove exp cor}, the inner sum only contributes to the error term
	$$\sum_{m\le \mme^{\sqrt3\tau u}/3^j\atop  m\in {\mathcal S}(y)}\frac{\cos(2\pi m(3^jk/\ell-1/3))-\cos(2\pi m(3^jk/\ell+1/3))}{m}\ll\log\tau,$$
	and then we have
		$$\frac{\pi }{{\mathcal G}(\chi_d)}\sum_{n\le\beta|d|}\chi_d(n)\ll\log\tau.$$
		We complete the proof of Theorem \ref{thm1.4}.
\section*{Acknowledgements}
The authors would like to thank Professor Youness Lamzouri for his valuable comments on the previous version of this article. Hao Zhang was supported by Fundamental Research Funds for the Central Universities (Grant No. 531118010622).




\begin{thebibliography}{99}
	



	\bibitem{BaCh} P. T. Bateman and S. Chowla,
	\emph{Averages of character sums.}
	Proc. Amer. Math. Soc. 1 (1950), 781--787.
	
 \bibitem{Bob}  J. W. Bober, 
\emph{Averages of character sums.}
Preprint (2014). \blue{arXiv:1409.1840}.

\bibitem{BoGo16}  J. W. Bober and L. Goldmakher, 
\emph{P\'olya-Vinogradov and the least quadratic nonresidue.}
Math. Ann. 366 (2016), no. 1-2, 853--863.
	\bibitem{BoGo}  J. W. Bober and L. Goldmakher,
	\emph{The distribution of the maximum of character sums,}
	Mathematika, 59 (2013), no. 1-2, 427--442.
	
	\bibitem{BGGK}  J. W. Bober, L. Goldmakher, A. Granville, and D. Koukoulopoulos,
	\emph{The frequency and the structure of large character sums.}
	J. Eur. Math. Soc. (JEMS) 20 (2018), no. 7, 1759--1818.
	
	\bibitem{GOLD} L. Goldmakher, \emph{Multiplicative mimicry and improvements of the P\'{o}lya-Vinogradov inequality.} Algebra Number Theory 6 (2012), no. 1, 123--163.
	\bibitem{GL1} L. Goldmakher and Y. Lamzouri, \emph{Lower bounds on odd order character sums}, Int. Math. Res. Not. 2012 (2012), no. 21, 5006--5013.
	\bibitem{GL2} L. Goldmakher and Y. Lamzouri, \emph{Large even order character sums}, Proc. Amer. Math. Soc. 142 (2014), no. 8, 2609--2614.
	\bibitem{GrSo} A. Granville and K. Soundararajan,
	\emph{The distribution of values of $L(1, \chi_d)$.}
	Geom. Funct. Anal. 13 (2003), no. 5, 992--1028.
	\bibitem{GrSo2} A. Granville and K. Soundararajan, \emph{Large character sums: pretentiuous characters and the P\'{o}lya-Vinogradov theorem} J. Amer. Math. Soc.  20 (2007), no. 2, 357-384.
 \bibitem{GrSo06} A. Granville and K. Soundararajan, 
\emph{Extreme values of $|\zeta(1+it)|$.} The Riemann zeta function and related themes: papers in honor of Professor K. Ramachandra, Ramanujan Mathematical Society Lecture Notes Series 2 (2006) 65--80.
	\bibitem{GrMa}  A. Granville and A. P. Mangerel, 
\emph{Three conjectures about character sums.}
Preprint (2021). \blue{arXiv:2112.12339}.












\bibitem{LaMa}  Y. Lamzouri and A. P. Mangerel, 
\emph{Large odd order character sums and improvements of the P\'olya-Vinogradov inequality.}
Trans. Amer. Math. Soc. 375 (2022), no. 6, 3759--3793.

\bibitem{La2022} Y. Lamzouri, \emph{The distribution of large quadratic character sums and applications.} Preprint (2022). \blue{arXiv:2212.03227}.








 


\bibitem{MV2} H. L. Montgomery and R.C. Vaughan, \emph{Exponential Sums with Multiplicative Functions.} Invent. Math. 43 (1977), 69--82.

\bibitem{MV79}  H. L. Montgomery and R. C. Vaughan,
\emph{Mean values of character sums.}
Canadian J. Math. 31 (1979), no. 3, 476--487.
	
 


\bibitem{MVbook} H. L. Montgomery and R.C. Vaughan,
\emph{Multiplicative Number Theory I: Classical Theory},
Cambridge Studies in Advanced Mathematics, Vol. 97. Cambridge University Press, 2006.
	
	\bibitem{Pa} R. E. A. C. Paley,
	\emph{A theorem on characters},
	J. London Math. Soc. 7 (1932), 28--32.
	\end{thebibliography}
\end{document}